\journal{arXiv}
\newcommand{\R}{{\Bbb R}}
\newcommand{\N}{{\Bbb N}}
\definecolor{alizarin}{rgb}{0.82, 0.1, 0.26}
\definecolor{burgundy}{rgb}{0.5, 0.0, 0.13}
\definecolor{darkblue}{rgb}{0.0, 0.0, 0.55}
\definecolor{deepcarmine}{rgb}{0.66, 0.13, 0.24}
\definecolor{navyblue}{rgb}{0.0, 0.0, 0.5}
\newtheorem{thm}{Theorem}
\newtheorem{cor}[thm]{Corollary}
\newtheorem{proposition}[thm]{Proposition}
\newproof{proof}{Proof}
\begin{document}
\begin{frontmatter}

\title{Global dynamics of a  size-structured forest model}

\author[a]{Franco Herrera}
\author[a]{Sergei Trofimchuk\corref{mycorrespondingauthor}}
\cortext[mycorrespondingauthor]{\hspace{-7mm} {\it e-mails addresses}:   franco.herrera@utalca.cl (F. Herrera); trofimch@inst-mat.utalca.cl (S. Trofimchuk, corresponding author)  \\}
\address[a]{Instituto de Matem\'aticas, Universidad de Talca, Casilla 747, Talca, Chile}

\bigskip

\begin{abstract}
\noindent  We study  a size-structured model proposed recently by C. Barril et al to describe the dynamics of trees growth in the forest. Our approach to the  associated renewal equation is rather different from the methods  in   
the cited work and is based on ideas developed in earlier authors' work on the relation between dynamics of one-dimensional maps and Gurtin-MacCamy's population model.  Assuming relatively weak restrictions on the reproduction, death and growth  rates $\beta, \mu, g$,  we establish the permanence properties of the semiflow $\frak F^t$  generated by the renewal equation and prove that it possesses a  compact global attractor of points $\mathcal A$. Next we show that the opposite types of monotonicity  of $\beta, g$ assure that $\frak F^t$ is also monotone and that in this case $\mathcal A$ coincides with  a unique  asymptotically stable equilibrium attracting neighbourhoods of compact sets with non-zero initial data. In particular, by establishing good permanence properties of the semiflow $\frak F^t$, 
our results additionally supports  well-posedness of the model elaborated by C. Barril et al. 

 \end{abstract}
\begin{keyword} size-structured model, semiflow, global attractor, stability, persistence   \\
\end{keyword}

\end{frontmatter}

\newpage

\section{Introduction and main results}  \label{intro} 
A temperate  forest can be provided with a collective order based on the size (height) of trees. In this order,  due to the better conditions for the photosynthesis, the taller trees have higher hierarchy in comparison with the smaller ones. This consideration allows to model the forest dynamics within frameworks of the theory of physiologically structured populations, see \cite{MZ,MD,HS,GW} where further references can be found. One of such models was recently proposed by C. Barril et al  \cite{FBCD} in form of the renewal equation 
\begin{equation}\label{RE}
b(t) =\int_0^\infty \beta\left( x_m + \int_0^a g\left( e^{-\mu(\tau -a)} \int_a^\infty e^{-\mu s} b(t-s)\,ds \right) d\tau \right) e^{-\mu a} b(t-a)da,
\end{equation}
where $b(t)$ is the tree's population birth rate at time $t$, $g(x)$ denotes the growth rate  of an individual of height $x$,  the positive parameter $\mu >0$ represents the per capita death rate  and $\beta(x)$ is the per capita reproduction rate (depending only on height $x$). It is also assumed  in \cite{FBCD}  that all newborn individuals have the minimal height $x_m \geq 0$ and that 

\vspace{2mm}

\noindent {\bf (M)} Mappings $\beta:[x_m,+\infty)\to [0, +\infty)$,  $g: [0,+\infty)\to (0, +\infty)$ are continuous, $\beta$  is  increasing and $g$ is decreasing one with $g(+\infty)=0$. Moreover, the monotone function 
$$R(b):=\int_0^\infty \beta\left( x_m + \int_0^a g\left( \frac{e^{-\mu\tau}}{\mu}b\right) d\tau \right) e^{-\mu a}da$$  
is such that the pre-image $R^{-1}(1)$   is either a single point $b_*$ or the empty set.

\vspace{2mm}

Model (\ref{RE}) is considered as an integral equation with infinite memory and is provided with a non-negative initial condition 
$b(s)=\phi(s) \geq 0, \ s \leq 0$. Even if $\phi$ is a continuous function, $b(t)$ can have a jump discontinuity at $t=0$. In part, this  explains the choice of the Banach space of measurable functions (instead of spaces of continuous functions of fading memory type \cite{AH})
$$
L^1_\rho(\R_{-})= \{\phi: |\phi|_\rho < \infty\}, \quad \mbox{where we use the notation} \quad |\phi|_q:= \int_{-\infty}^0|\phi(s)|e^{qs}ds,
$$
with $\rho < \mu$ as an appropriate phase space for equation (\ref{RE}). Fixing an initial function $\phi \geq 0$, we can rewrite equation (\ref{RE}) in the Volterra's form $b(t) =  (\mathcal{V}_\phi b)(t) + r_\phi(t)$, where 
$$\hspace{-2mm}(\mathcal{V}_\phi b)(t) = \int_0^t \beta\left( x_m + \int_0^a g\left( e^{-\mu(\tau -a)}(\int_a^t e^{-\mu s} b(t-s)ds+ e^{-\mu t}|\phi|_\mu)\right)d\tau\right) e^{-\mu a}b(t-a)da, $$
$$r_\phi(t)= \int_t^\infty \beta\left( x_m + \int_0^a g\left( e^{-\mu(\tau -a)} \int_a^\infty e^{-\mu s} \phi(t-s)ds\right) d\tau \right) e^{-\mu a} \phi(t-a)da.
$$
Then a standard argumentation (outlined in Appendix) yields the following  existence and uniqueness result.  
\begin{proposition}\label{P1}
Assume that continuous function $\beta:[x_m,+\infty)\to [0, +\infty)$  has at most polynomial growth and  $g: [0,+\infty)\to [0, +\infty)$ is continuous and bounded. Then 
\begin{itemize}
\item[(a)] there exists at least one non-negative  function $b_\phi(t)$ which is continuous on 
$[0, +\infty)$ and solves the initial value problem $b(s)=\phi(s) \geq 0, \ s \leq 0$ for  (\ref{RE});
\item[(b)] in addition, if $\beta$ is locally Lipschitizian function then the solution $b_\phi$ is a unique one, it is continuously differentiable on $(0,+\infty)$ and the derivative $b_\phi'(t)$ is uniformly bounded on each interval $(0,T]$. 
\item[(c)] Moreover, if the assumption of item (b) is satisfied and $\phi \to \phi_0$ in $L^1_\rho(\R_{-})$ then $b_\phi(t) \to b_{\phi_0}(t)$ uniformly on compact subsets of 
$[0,+\infty)$. 
\end{itemize}
\end{proposition}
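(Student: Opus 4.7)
The plan is to recast the initial-value problem as the Volterra fixed-point problem $b=\mathcal{V}_\phi b+r_\phi$ in $C([0,T],[0,+\infty))$ for an arbitrary $T>0$, handle the three items in turn by Volterra-type arguments, and extend $T\to\infty$ at the end of (a). For (a), I first derive an a~priori bound. Since $g\le G$ and $\beta$ has at most polynomial growth, the kernel $B(a):=\beta(x_m+Ga)\,e^{-\mu a}$ is integrable on $[0,+\infty)$, and any non-negative continuous solution satisfies the linear Volterra inequality $b(t)\le r_\phi(t)+\int_0^t B(a)\,b(t-a)\,da$. In the weighted norm $\|b\|_\lambda:=\sup_{t\in[0,T]}e^{-\lambda t}b(t)$ with $\lambda$ chosen so that $\int_0^\infty e^{-\lambda a}B(a)\,da<1$, this produces a pointwise bound $M_T$. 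On the closed convex set $K_T:=\{b\in C([0,T]):0\le b\le M_T\}$ the operator $\mathcal{T}(b):=\mathcal{V}_\phi b+r_\phi$ maps $K_T$ into itself, is continuous by dominated convergence together with the continuity of $\beta,g$, and has equicontinuous image since the integrand of $\mathcal{V}_\phi b$ is continuous in $t$ and dominated by $M_T\,B(a)$; Arzel\`a--Ascoli and Schauder's fixed-point theorem then supply a non-negative continuous solution on $[0,T]$, and arbitrariness of $T$ provides a global one.

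For (b), let $b_1,b_2$ be two continuous solutions on $[0,T]$, both bounded by $M_T$. Their $\beta$-arguments lie in $[x_m,x_m+GT]$, where $\beta$ has a Lipschitz constant $L_T$. Splitting $(\mathcal{V}_\phi b_1-\mathcal{V}_\phi b_2)(t)$ into a piece in which the outer $b$-factor differs (dominated by $\int_0^t B(a)\,v(t-a)\,da$ with $v:=|b_1-b_2|$) and a piece in which the $\beta$-factor differs (controlled by $L_T$ and the affine dependence on $b$ of the inner $g$-arguments), Fubini produces a linear Volterra inequality $v(t)\le\int_0^t K_T(t,s)\,v(s)\,ds$, and Gronwall for Volterra equations forces $v\equiv 0$. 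The bound $M_T$ further implies that $b$ is Lipschitz on $[0,T]$, so Leibniz differentiation of the Volterra equation in $t$ (producing a boundary term at $a=t$ and the usual derivative-under-the-integral) is justified and yields the $C^1$-regularity on $(0,+\infty)$, with $b_\phi'$ bounded on each $(0,T]$ by a multiple of $M_T+|\phi|_\mu$.

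Part (c) is a compactness-plus-uniqueness argument. Since $\rho<\mu$, the convergence $\phi_n\to\phi_0$ in $L^1_\rho(\R_{-})$ implies $|\phi_n|_\mu\to|\phi_0|_\mu$ and $r_{\phi_n}\to r_{\phi_0}$ uniformly on compact subsets of $[0,+\infty)$, so the a~priori bound $M_T$ can be chosen uniformly in $n$. Equicontinuity from (a) together with Arzel\`a--Ascoli extract a subsequence $b_{\phi_{n_k}}$ converging uniformly on compacts to some $\bar b$; passing to the limit in the Volterra equation via dominated convergence shows that $\bar b$ solves the equation with initial data $\phi_0$, so $\bar b=b_{\phi_0}$ by (b), and the whole family therefore converges.

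The main obstacle is the uniqueness step in (b). Only $\beta$ is assumed locally Lipschitz, whereas the composition $g(\alpha(b))$ inside $\beta$ is merely continuous; converting the difference $|\beta(x_m+\int_0^a g(\alpha_1(b_1))\,d\tau)-\beta(x_m+\int_0^a g(\alpha_2(b_2))\,d\tau)|$ into a linear-in-$|b_1-b_2|$ estimate requires carefully exploiting the affine dependence of the $\alpha_i$ on $b_i$ and the exponential weights $e^{-\mu a}$, $e^{-\mu s}$ to absorb the $g$-term into a Volterra kernel before invoking Gronwall, and this bookkeeping is where the real technical work of the proof lies.
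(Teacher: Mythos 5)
Your overall architecture — Volterra reformulation $b=\mathcal{V}_\phi b+r_\phi$, weighted sup-norm a priori bound, Schauder on $[0,T]$ plus diagonal extension for (a), Gronwall for (b), compactness-plus-uniqueness for (c) — matches the paper's. Two issues, one minor, one substantive.

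Minor: you bound $\mathcal{V}_\phi b$ by the kernel $B(a)=\beta(x_m+Ga)e^{-\mu a}$, which tacitly uses monotonicity of $\beta$; Proposition~\ref{P1} does not assume it. The fix is to take $\tilde B(a)=\max_{x\in[x_m,\,x_m+Ga]}\beta(x)\,e^{-\mu a}$, still integrable by polynomial growth and continuity.

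Substantive: in (b) you correctly identify the obstruction — $g$ is only continuous, so the difference $\int_0^a\bigl(g(\alpha_1)-g(\alpha_2)\bigr)d\tau$ cannot be Lipschitz-estimated — and you assert it can be ``absorbed into a Volterra kernel'' by ``carefully exploiting the affine dependence... and the exponential weights.'' But you never say how, and the affine dependence alone is not enough: boundedness of $g$ only gives $|g(\alpha_1)-g(\alpha_2)|\le 2\sup g$, which is not linear in $|b_1-b_2|$. The paper resolves this with a specific device: writing $B_j(a)=\int_{-\infty}^a e^{\mu s}b_j(s)\,ds$, the change of variable $u=e^{-\mu\tau}B_j(a)$ turns the $\tau$-integral into $\frac{1}{\mu}\int g(u)/u\,du$ over an interval with endpoints proportional to $B_j(a)$; the difference is then bounded by $\frac{2\sup g}{\mu}\bigl|\ln\tfrac{B_2(a)}{B_1(a)}\bigr|\le\frac{2\sup g}{\mu|\phi|_\mu}\int_0^t e^{\mu s}|b_2(s)-b_1(s)|\,ds$, which is the linear Volterra estimate you need. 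This argument crucially requires $|\phi|_\mu>0$, so the case $\phi=0$ must be treated separately (there uniqueness is trivial since $r_\phi\equiv 0$ and Gronwall immediately gives $b\equiv 0$). Your proposal neither supplies the substitution/logarithm trick nor isolates the $\phi=0$ case, so the claimed inequality $v(t)\le\int_0^t K_T(t,s)v(s)\,ds$ is not actually established. Everything else, including parts (a) and (c), is sound and essentially the paper's argument.
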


By relaxing assumptions of growth and continuity on $\beta$ and $g$,  Proposition \ref{P1} improves \cite[Theorem 3.2]{FBCD}  and shows that if $\beta$ is locally Lipschitizian then 
the renewal equation  (\ref{RE}) generates a continuous semiflow 
\mbox{$\frak F: L^1_\rho(\R_{-}) \times \R_+ \to L^1_\rho(\R_{-})$} by  $(\frak F^t \phi)(s) = b(t+s),$ $s \leq 0$. It has a zero 
steady state $b(t) \equiv 0$ while  the positive equilibria of this semiflow are defined from the scalar equation 
 \begin{equation}\label{ERE}
1= R(b)=\int_0^\infty \beta\left( x_m + \int_0^a g\left( \frac{e^{-\mu\tau}}{\mu}b\right) d\tau \right) e^{-\mu a}da. 
\end{equation}
Note that under hypothesis {\bf (M)} function  $R(b)$ is decreasing on $\R_+$ and the latter equation has (a unique positive) solution $b_*$ if and only if 
$R(0) >1$ and $R(+\infty)= \beta(x_m)/\mu <1$, cf. \cite[Theorem 4.1]{FBCD}. A straightforward application of the comparison argument shows that all solutions of equation (\ref{RE}) are exponentially converging to $0$ when $R(0) <1$, 
see Theorem 5.2 in \cite{FBCD} (as we show it later,  under hypothesis {\bf (M)} all solutions vanish at $+\infty$ even if $R(0)=1$). Thus $R(0)$ can be interpreted as the basic reproduction number. 

We will be interested in eventually positive solutions of equation (\ref{RE}) and the following dichotomy result will be 
sufficient for our purposes. In its statement, we consider the operator ${\mathcal F}: L^1_\rho(\R_{-}, \R_+) \to \R_+$ defined by
\begin{equation}\label{OF}
{\mathcal F}\phi:=\int_0^\infty \beta\left( x_m + \int_0^a g\left( e^{-\mu(\tau -a)} \int_a^\infty e^{-\mu s} \phi(-s)\,ds \right) d\tau \right) e^{-\mu a} \phi(-a)da. 
\end{equation}

\begin{proposition}\label{P2}
Assume that $\beta(x) > 0$ a.e. on $[x_0,+\infty)$  for some $x_0\geq x_m$  and either $\inf_{x\geq 0} g(x) >0$ or 
$g(x)>0$ and $b(t)$ is a solution bounded  on $\R_+$.  Then either $b(t) =0$ for all $t \geq 0$ (so that ${\mathcal F}\phi =0$)  or  there exists some $t_0 \geq 0$ such that $b(t) >0$ for all $t \geq t_0$. 
\end{proposition}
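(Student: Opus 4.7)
The plan is to argue by contradiction: assume $b$ does not vanish identically on $[0,+\infty)$ yet the closed zero-set $Z:=\{t\geq 0:b(t)=0\}$ is unbounded, and derive a contradiction. Since $b$ is continuous, I can fix a compact interval $[s_1,s_2]\subset(0,+\infty)$ on which $b\geq c>0$, and pick $t_n\in Z$ with $t_n\to+\infty$. Because the integrand in (\ref{RE}) is nonnegative and $b(t_n)=0$, the product $\beta(x(a,t_n))\,b(t_n-a)$ vanishes for a.e.\ $a\geq 0$; restricting to the window $a\in I_n:=[t_n-s_2,t_n-s_1]$, on which $b(t_n-a)\geq c$, this produces a set $E_n\subset I_n$ of full measure $s_2-s_1$ on which $\beta(x(a,t_n))=0$. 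The goal will be to contradict this by showing $|E_n|=0$ for all $n$ large.

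To this end, I introduce the history integral $B(s):=\int_0^\infty e^{-\mu u}b(s-u)\,du$, which satisfies $B'(s)=b(s)-\mu B(s)$, and rewrite, after the substitution $u=s-a$ in the inner integral of (\ref{RE}),
\[
x(a,t)=x_m+\int_0^a g(e^{-\mu\tau}B(t-a))\,d\tau.
\]
Differentiating in $a$ by Leibniz, applying the change of variable $w=e^{-\mu\tau}B(t-a)$ to the resulting integral of $g'$, and using $B'=b-\mu B$, I expect to arrive at
\[
\partial_a x(a,t)=g(B(t-a))+\frac{b(t-a)}{\mu\,B(t-a)}\bigl[g(e^{-\mu a}B(t-a))-g(B(t-a))\bigr].
\]
Because $g$ is decreasing and $e^{-\mu a}B\leq B$, the bracket is nonnegative, so $\partial_a x(a,t)\geq g(B(t-a))$. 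For $a\in I_n$ one has $t_n-a\in[s_1,s_2]$, on which $B$ is continuous and bounded by some $B_*<\infty$; by monotonicity of $g$ this yields $\partial_a x(a,t_n)\geq g(B_*)=:c_0>0$ uniformly in $n$. Under the alternative hypothesis $\inf g>0$ the same bound is trivial.

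Integrating this lower bound gives $x(a,t_n)\geq x_m+c_0(t_n-s_2)$, which exceeds $x_0$ for all $n$ sufficiently large. Consequently the map $F_n(a):=x(a,t_n)$ sends $I_n$ into $[x_0,+\infty)$, is strictly increasing and absolutely continuous with $F_n'\geq c_0$ a.e., so by the area formula $|F_n(E_n)|\geq c_0\,|E_n|=c_0(s_2-s_1)>0$. But by construction $F_n(E_n)\subset\{x\geq x_0:\beta(x)=0\}$, a Lebesgue-null set: contradiction. Hence $Z$ is bounded and $b(t)>0$ for every $t>\sup Z$; the remaining possibility $b\equiv 0$ on $[0,+\infty)$ automatically gives $\mathcal{F}\phi=b(0)=0$, completing the dichotomy. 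I expect the main obstacle to be the clean algebraic derivation of the formula for $\partial_a x$ and the identification of its sign structure; the role of the boundedness hypotheses on $g$ or on $b$ is precisely to keep $g(B(t-a))$ bounded below by a positive constant on the relevant range, so that $F_n$ is uniformly expanding.
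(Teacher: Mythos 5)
Your overall strategy is genuinely different from the paper's. The paper rewrites the renewal equation as the Volterra inequality $b(t)\ge\int_0^t K(a)\,b(t-a)\,da+r_\phi(t)$, with a kernel bounded below using $g_*:=\min_{[0,\Gamma]}g>0$ (resp.\ $g_*:=\inf g$), observes $r_\phi(0)=b(0)>0$, and then invokes a persistence result (Corollary B.6 of Smith--Thieme) as a black box. You instead argue by contradiction directly, and the part that is actually delicate -- that $\beta$ is only positive \emph{a.e.}\ on $[x_0,\infty)$ -- is exactly what your area-formula step is designed to handle. That is a legitimate and more self-contained approach, and several pieces are correct: the identity $x(a,t)=x_m+\int_0^a g(e^{-\mu\tau}B(t-a))\,d\tau$, the ODE $B'=b-\mu B$, and the computation leading to
\[
\partial_a x(a,t)=g(B(t-a))+\frac{b(t-a)}{\mu\,B(t-a)}\bigl[g(e^{-\mu a}B(t-a))-g(B(t-a))\bigr]
\]
are all verified correctly.

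The gap is the sign argument. You conclude that the bracket is nonnegative ``because $g$ is decreasing,'' but Proposition~\ref{P2} does not assume hypothesis {\bf (M)} and in particular does not assume any monotonicity of $g$; it only assumes $g$ continuous, positive, and bounded (from Proposition~\ref{P1}), together with either $\inf g>0$ or boundedness of $b$. Without monotonicity the bracket can change sign, and moreover the coefficient $b(t-a)/(\mu B(t-a))$ is not bounded above, so $\partial_a x(a,t_n)$ can in principle vanish or become negative on a set of positive measure. In that case $F_n$ is merely $C^1$: it satisfies Luzin's condition (N) forward, but \emph{not} backward, so $F_n^{-1}(\{\beta=0\}\cap[x_0,\infty))$ can have positive measure even though the target is a null set (think of a $C^1$ map with a flat spot landing on a point where $\beta=0$). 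The remark that the bound is ``trivial'' when $\inf g>0$ has the same flaw: $\inf g>0$ gives $x(a,t_n)\ge x_m+a\inf g$, which ensures the image lies in $[x_0,\infty)$ for $n$ large, but it does not give any lower bound on $\partial_a x$ and therefore does not supply the expansion property your area-formula step needs. To close the gap you would need either to add the monotonicity of $g$ to the hypotheses (at which point you only recover the proposition under {\bf (M)}, which is weaker than what Theorem~\ref{mainTT} requires), or to find a way to exclude a positive-measure set of critical points of $a\mapsto x(a,t_n)$ hitting the zero set of $\beta$, which the present argument does not do.
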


In the sequel, we will use the notation $\mathcal N$ for the non-empty closed subset of all initial data $\phi,\ {\mathcal F}\phi =0$, which generate the zero continuation $b(t)\equiv 0$ on $\R_+$.

Assume now that equation  (\ref{RE})  has a positive equilibrium $b_*$ (i.e. $R(0) >1$, $\beta(x_m) < \mu$). 
One of basic questions concerns its stability properties (with respect to semiflow $\frak F^t \phi$). Using 
the principle of linearised stability for delay equations from \cite{DG}, the authors of \cite{FBCD} proved the local asymptotical   stability of the positive equilibrium $b_*$ for the particular case when $x_m=0$ and $\beta(x)= \max\{0, x- x_A\}$
with some $x_A>0$ (see Theorem 5.4 in \cite{FBCD}). In particular, their approach required certain technical efforts while 
establishing the continuous differentiability of the operator ${\mathcal F}: L^1_\rho(\R_{-}, \R_+) \to \R_+$. 

Clearly, the local asymptotical stability of the positive equilibrium does not exclude other and more complex (for example, periodic or unbounded) types of dynamical behaviour of trajectories for the semiflow $\frak F^t \phi$. This question was not considered in \cite{FBCD} and it is the main object of studies in the present work. Here, by invoking several ideas from \cite{HT} (where they were used to study the renewal equation associated  with a specific Gurtin-MacCamy's population model),  we describe the general structure of the semiflow  $\frak F^t \phi$ under relatively weak assumptions on functions $\beta$ and $g$. 

In our main result the monotonicity assumption {\bf (M)} is not required though similarly to \cite{FBCD} we impose the global Lipschitz condition on the function $\beta:[x_m,+\infty)\to [0, +\infty)$. 

\begin{thm}\label{mainTT} Assume that $0 \leq  \beta(x_m)< \mu$,  $x_m \geq 0$, $R(0) >1$,  that 
$\beta(x) > 0$ a.e. on $[x_0,+\infty)$  for some $x_0\geq x_m$,  that $\beta$ satisfies the global Lipschitz condition on $\R_+$ and  that positive continuous function $g$ vanishes at $+\infty$.   Then on the open set $L^1_\rho(\R_{-}, \R_+)\setminus \mathcal N$ the semiflow  $\frak F^t \phi$ has a compact global attractor $\mathcal A$ 
attracting each solution with initial datum in  $L^1_\rho(\R_{-}, \R_+)\setminus \mathcal N$.  Furthermore, restricted on any compact interval $[-n,0] \subset \R_-$, $n \in \N$, this convergence is uniform in the sup-norm.  Next, each element of $\mathcal A$ is a  differentiable function and there are universal positive numbers $\theta_1 \leq \theta_2, \theta_3$ such that   
$$
0 <\theta_1 \leq \psi(s) \leq \theta_2, \quad |\psi'(s)| \leq \theta_3, \ s \leq 0, \ \mbox{for each} \ \psi \in \mathcal A. 
$$
\end{thm}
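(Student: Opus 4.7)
The plan is to follow the standard three-step recipe for a compact global attractor: (i) a uniform upper bound (dissipativity), (ii) a uniform lower bound away from $\mathcal N$ (persistence), and (iii) asymptotic smoothness, after which a classical theorem (e.g.\ Hale) produces $\mathcal A$. The starting point is a convenient reformulation of (\ref{RE}): the substitution $u=s-a$ in the inner integral yields $e^{-\mu(\tau-a)}\int_a^\infty e^{-\mu s}b(t-s)\,ds = e^{-\mu\tau}B(t-a)$, where $B(t) := \int_0^\infty e^{-\mu a}b(t-a)\,da$ is the total weighted population (satisfying the simple ODE $B'(t)=b(t)-\mu B(t)$). Hence (\ref{RE}) reads
\begin{equation*}
b(t) = \int_0^\infty \beta(H(a, B(t-a)))\,e^{-\mu a}\,b(t-a)\,da, \qquad H(a,B) := x_m + \int_0^a g(e^{-\mu\tau}B)\,d\tau.
\end{equation*}
Since $g(+\infty)=0$, one notes $H(a,B)\to x_m$ as $B\to+\infty$ for each fixed $a$.

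For (i) I would exploit the condition $R(+\infty)=\beta(x_m)/\mu<1$. By the Lipschitz continuity of $\beta$ and dominated convergence, for any $q\in(\beta(x_m)/\mu,1)$ there exists $B_0$ with $\tilde R(B) := \int_0^\infty \beta(H(a,B))e^{-\mu a}da \leq q$ whenever $B\geq B_0$. The crude a priori estimate $\beta(H(a,B))\leq \beta(x_m)+L a\,g(0)$ (where $L$ is a Lipschitz constant of $\beta$) also shows the integral kernel has an integrable majorant, so no finite-time blow-up occurs. Splitting $\int_0^\infty da = \int_0^A + \int_A^\infty$, absorbing the tail by the exponential decay $e^{-\mu a}$, and applying the contraction factor $q$ on time-windows where $B(t-a)\geq B_0$, a Volterra/Gronwall-type estimate then forces $\limsup_{t\to\infty} b_\phi(t)\leq \theta_2$ with $\theta_2$ universal. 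For (ii), Proposition \ref{P2} supplies $t_0=t_0(\phi)$ with $b_\phi(t)>0$ for $t\geq t_0$; the supercriticality $R(0)>1$ and continuity of $\tilde R$ give $r>1$ and a neighborhood of $B=0$ on which $\tilde R\geq r$, and a standard fluctuation/persistence argument then produces a universal $\theta_1>0$ with $\liminf_{t\to\infty} b_\phi(t)\geq \theta_1$ for every $\phi\in L^1_\rho(\R_-,\R_+)\setminus\mathcal N$.

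For (iii) I would invoke Proposition \ref{P1}(b): $b_\phi\in C^1((0,+\infty))$ with $b_\phi'$ bounded on bounded intervals. Differentiating the reformulated equation and inserting the uniform bound $b_\phi\leq\theta_2$ from (i) together with the global Lipschitz bound on $\beta$ yields a universal $\theta_3$ with $|b_\phi'(t)|\leq\theta_3$ for all sufficiently large $t$. Thus each orbit eventually lies in the set of functions on $\R_-$ uniformly bounded by $\theta_2$ and uniformly $\theta_3$-Lipschitz, which is precompact in $C([-n,0])$ for every $n\in\N$ by Arzel\`a--Ascoli and, via a diagonal argument together with the smallness of $e^{\rho\cdot}$-weighted tails, also precompact in $L^1_\rho(\R_-)$. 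A standard compact-attractor theorem then produces $\mathcal A$ with the advertised attracting property and sup-norm convergence on each $[-n,0]$; every $\psi\in\mathcal A$ is the trace at $0$ of a complete bounded orbit contained in the absorbing set, and thereby inherits the bounds $\theta_1\leq\psi(s)\leq\theta_2$ and $|\psi'(s)|\leq\theta_3$. I expect the main obstacle to be step (i): without the monotonicity assumption \textbf{(M)}, $B\mapsto\beta(H(a,B))$ is not monotone, so the pointwise contraction $\tilde R(B)\leq q$ for $B\geq B_0$ must be carefully transferred to the non-autonomous kernel $\beta(H(a,B(t-a)))$ along a trajectory whose history $B(t-a)$ may dip below $B_0$ at scattered ages; the resulting truncation-plus-tail estimate, rather than a simple comparison, is where the core technical work lies.
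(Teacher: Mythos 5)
Your high-level plan (ultimate boundedness, persistence, derivative bound $\Rightarrow$ compactness in $L^1_\rho$ $\Rightarrow$ compact attractor via a standard theorem) coincides with the paper's, and your steps (ii) and (iii) track the paper's Corollaries~\ref{cor9} and \ref{cor8} respectively; the final assembly via an Arzel\`a--Ascoli absorbing set $\mathcal D$ and \cite[Theorem~2.17(c)]{ST} is exactly what the paper does. The genuine gap is in step (i), and it is exactly the one you flag yourself. The pointwise estimate $\tilde R(B)\leq q<1$ for $B\geq B_0$ bounds $\int_0^\infty\beta(H(a,B))e^{-\mu a}\,da$ only when the \emph{same} constant $B$ sits at every age $a$; in the equation the kernel is $\beta\bigl(H(a,B(t-a))\bigr)$ with $B(t-a)$ varying over $a$, and without monotonicity of $B\mapsto\beta(H(a,B))$ there is no pointwise-in-$a$ consequence of $\tilde R(B)\leq q$ that you can insert under the integral. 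The ``truncation-plus-tail'' you gesture at is not spelled out, and I do not see how it closes: at ages where the history dips below $B_0$ you only have the crude bound $\beta(H(a,\cdot))\leq\beta(x_m)+La\sup g$, and these ages can carry a non-negligible fraction of $\int e^{-\mu a}\,da$.

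The paper's Theorem~\ref{Th7} bypasses the non-monotonicity issue by a different and entirely explicit route that you should adopt. Use the global Lipschitz bound to linearize $\beta$ from above: $\beta(x_m+x)\leq\beta(x_m)+cx$. Insert this into the Volterra form of (\ref{RE}), set $G(x)=\int_0^x g$, and use Fubini to collapse the double age integral into a term involving $G$ evaluated along the trajectory. Since $g(+\infty)=0$, $G$ is sublinear: $G(x)\leq C_\varepsilon+\varepsilon x$ for every $\varepsilon>0$. This turns the problem into a \emph{linear} Volterra inequality for $B(t)=e^{\mu t}b(t)$ with kernel $\beta(x_m)+\varepsilon c(t-s)$ and forcing of order $cC_\varepsilon\mu^{-1}e^{\mu t}$, which is equivalent to the scalar comparison ODE $\xi''=\mu cC_\varepsilon e^{\mu t}+\beta(x_m)\xi'+\varepsilon c\xi$. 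Its characteristic exponents $\lambda_\pm=\tfrac12\bigl(\beta(x_m)\pm\sqrt{\beta(x_m)^2+4\varepsilon c}\bigr)$ satisfy $\lambda_+<\mu$ once $\varepsilon$ is small, precisely because $\beta(x_m)<\mu$; hence $b(t)=e^{-\mu t}B(t)\leq e^{-\mu t}\xi(t)$ is ultimately bounded by the universal constant $\alpha_1=\mu cC_\varepsilon/(\mu^2-\mu\beta(x_m)-\varepsilon c)$ and globally by $\alpha_2|\phi|_\rho+\alpha_3$. This gives your step (i) with explicit constants, requires no structure on $g$ beyond continuity, positivity and decay at infinity, and makes no use of the contraction $\tilde R(B)<1$ at all. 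One further remark: your description of step (ii) as a ``standard fluctuation/persistence argument'' undersells it; the paper's Corollary~\ref{cor9} needs a two-case analysis with shifted solutions, Arzel\`a--Ascoli, and the derivative bound of Corollary~\ref{cor8}, since in the infinite-delay non-monotone setting uniform persistence is not automatic from $R(0)>1$.
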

In particular, by establishing good permanence properties of the semiflow $\frak F^t \phi$, 
this result additionally supports  the well-posedness of  model  (\ref{RE}). An explicit (constructive) upper bound $\theta_2$ for $\mathcal A$ in the sup-norm is given in Corollary \ref{cor11}.  According to the classification of global attractors in \cite[Chapter 2]{ST}, the compact set $\mathcal A$ in Theorem \ref{mainTT} is a global attractor of points. 
In view of the possibility of complete positive orbits connecting the zero equilibrium and $\mathcal A$, 
this attractor does not attract every bounded set. Thus $\frak F^t \phi$ restricted on the open subset  $L^1_\rho(\R_{-}, \R_+)\setminus \mathcal N$ is not asymptotically smooth, cf.  \cite[Theorem 2.33]{ST}.  Still, as the next theorem shows, 
$\mathcal A$ can be a compact attractor of neighbourhoods of compact sets, cf. \cite{MZh,ST}. 
In this theorem, similarly to \cite{FBCD} assuming  the hypothesis {\bf (M)}, we describe the structure of the 
global attractor $\mathcal A$. The key fact here is that {\bf (M)} implies the monotonicity of semiflow $\frak F^t \phi$  \cite{Smith} (see Section \ref{Ex1} for even more general result).  

\begin{thm}\label{mainT} Assume that $\mu >0, x_m \geq 0$, that $\beta$ is globally Lipschitzian function, and that  the assumption {\bf (M)} is satisfied.  Then 
\begin{itemize}
\item[(a)] the inequalities $R(0) >1$ and $\beta(x_m) < \mu$ guarantee that the unique positive equilibrium $b_*$ attracts all non-zero solutions and is locally stable in the uniform norm: i. e. for each $\epsilon >0$ there exists $\delta >0$ such that each solution $b(t,\phi)$ of the initial value problem $b(s,\phi)=\phi(s),\ s \leq 0,$ satisfies 
$|b(t,\phi)-b_*| < \epsilon$, $t \geq 0$, once $|\phi- b_*|_\rho < \delta$.  Moreover, if $\delta >0$ is sufficiently small then 
$\lim_{t \to +\infty}|b(t,\phi)-b_*|=0$ uniformly on the open ball $|\phi- b_*|_\rho < \delta$. In particular, $\mathcal A= \{b_*\}$ attracts neighbourhoods of compact sets in $L^1_\rho(\R_{-}, \R_+)\setminus \mathcal N$. 
\item[(b)] the inequalities $R(0) >1$ and $\beta(x_m) \geq \mu$ imply that each non-zero solution of equation  (\ref{RE}) is unbounded;
\item[(c)] $R(0) \leq 1$ guarantees that each solution of equation  (\ref{RE}) vanishes at $+\infty$.  
\end{itemize}
\end{thm}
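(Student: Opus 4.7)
All three parts exploit the monotonicity of $\frak F^t$ granted by \textbf{(M)} together with an analysis of orbits issued from constant initial data $\phi_c\equiv c$. Evaluating (\ref{RE}) at $t=0^+$ gives the instantaneous jump $b(0^+;c)=cR(c)$; when $R(c)>1$, continuity yields $b(\cdot;c)\geq c$ on a right-neighborhood $[0,t_1]$ of $0$, hence $\frak F^{t_1}\phi_c\geq\phi_c$ pointwise on $\R_-$. Monotonicity then gives $b(\tau+t_1;c)\geq b(\tau;c)$ for every $\tau\geq 0$, and a short bootstrap in $t_1$ shows $b(\cdot;c)$ is monotone non-decreasing on $\R_+$; symmetrically, $R(c)<1$ forces $b(\cdot;c)$ non-increasing.

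\textbf{Part (a).} Theorem \ref{mainTT} supplies the compact attractor $\mathcal A\subset[\theta_1,\theta_2]$. For $0<c<b_*<C$ the above tool yields $b(\cdot;c)$ increasing and $b(\cdot;C)$ decreasing, both sandwiched against the stationary orbit $b_*$ and hence confined to $[cR(c),b_*]$ and $[b_*,CR(C)]$ respectively. Passing $t\to\infty$ in the semigroup identity $\frak F^{s+t}\phi_c=\frak F^s\frak F^t\phi_c$ and using continuity of $\frak F^s$ shows that their pointwise limits correspond to constant equilibria, which by the uniqueness clause of \textbf{(M)} must both equal $b_*$. A general $\phi\in L^1_\rho(\R_-,\R_+)\setminus\mathcal N$ satisfies $b(t;\phi)\in[\theta_1,\theta_2]$ for all $t$ large by Theorem \ref{mainTT}; after a sufficiently large shift, monotone sandwiching between constant-$c$ and constant-$C$ orbits with $c<\theta_1<\theta_2<C$ forces $b(t;\phi)\to b_*$ uniformly. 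Local uniform $L^1_\rho$-stability then follows from the continuity of the operator $\mathcal F$ at $b_*$ (so that $b(0^+;\phi)$ is near $b_*$ whenever $|\phi-b_*|_\rho$ is small), continuous dependence on compact time-intervals (Proposition \ref{P1}(c)), and the shrinkage of the intervals $[(b_*-\delta)R(b_*-\delta),\,(b_*+\delta)R(b_*+\delta)]$ to $\{b_*\}$ as $\delta\to 0^+$.

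\textbf{Part (b).} Since $R(+\infty)=\beta(x_m)/\mu\geq 1$ and $R$ is monotone with $R^{-1}(1)\in\{\emptyset,\{b_*\}\}$ under \textbf{(M)}, a short case analysis yields $R>1$ on $[0,+\infty)$ so that (\ref{ERE}) has no positive solution. Choosing any $c>0$ with $R(c)>1$, the tool gives a monotone non-decreasing orbit $b(\cdot;c)$; if it were bounded, its limit would be a positive equilibrium by the argument of (a), a contradiction. Hence $b(\cdot;c)$ is unbounded. For general $\phi\not\in\mathcal N$, Proposition \ref{P2} furnishes $t_0$ with $b(t;\phi)>0$ on $[t_0,+\infty)$, and continuity produces $b(\cdot;\phi)\geq c$ on a sufficiently long window $[t_0,t_0+T]$; then a shift $\frak F^{t_1}\phi$ with $t_1\geq t_0+T$ pointwise dominates $\phi_c$ and monotonicity transfers unboundedness.

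\textbf{Part (c).} The pointwise inequality $g\leq g(0)$ combined with $\beta$ increasing gives the linear renewal bound
\begin{equation*}
b(t)\leq\int_0^\infty K(a)\,b(t-a)\,da,\qquad K(a):=\beta(x_m+ag(0))e^{-\mu a},\qquad\int_0^\infty K(a)\,da=R(0)\leq 1.
\end{equation*}
For $R(0)<1$ this is the subcritical case already covered by Theorem 5.2 of \cite{FBCD}. The main obstacle is the critical case $R(0)=1$, where the estimate $B_\infty\leq R(0)B_\infty$ for $B_\infty:=\limsup_{t\to\infty}b(t)$ is tautological. My approach is to assume $B_\infty>0$ and, along a maximizing sequence $t_n\to\infty$ with $b(t_n)\to B_\infty$, extract via the uniform derivative bound of Proposition \ref{P1}(b) a limiting profile $b_\infty:\R\to[0,B_\infty]$ solving (\ref{RE}) with $b_\infty(0)=B_\infty$. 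The chain $b_\infty(0)\leq\int_0^\infty K(a)b_\infty(-a)\,da\leq R(0)B_\infty=B_\infty$ must then be equality throughout; the second equality forces $b_\infty\equiv B_\infty$ on the support of $K$, and the first forces $g(\eta_\infty(\tau,a))=g(0)$ on the relevant set of $\tau$. Under \textbf{(M)} with $R(0)=1$ necessarily $R^{-1}(1)=\{0\}$, which in particular excludes any plateau of $g$ at the origin and hence yields $g(\eta)<g(0)$ for every $\eta>0$; combined with the positivity of the history of $b_\infty$ this yields the desired contradiction, so $B_\infty=0$ and $b(t)\to 0$.
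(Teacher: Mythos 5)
Your strategy---analysing orbits issued from constant initial data and exploiting the monotonicity of $\frak F^t$---is a genuinely different route from the paper, which instead reads off the attractor structure directly from the interval inclusion $[m,M]\subseteq[F(m),F(M)]$ of Corollary \ref{cor7}, uses a compactness/contradiction argument for the uniform local stability, and simply differentiates the renewal equation in part~(b). Your scheme is attractive, but as written it has several genuine gaps.

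\textbf{Part (a), attraction of general orbits.} The squeeze ``after a sufficiently large shift'' does not work for an arbitrary $\phi\in L^1_\rho(\R_-,\R_+)\setminus\mathcal N$. The translated history $(\frak F^T\phi)(s)=b(T+s;\phi)$ lies in $[\theta_1,\theta_2]$ only for $s$ in a bounded interval; for $s$ very negative it equals the raw initial datum $\phi(T+s)$, which is merely in $L^1_\rho$ and need not be pointwise bounded above or below. Hence $\phi_c\leq\frak F^T\phi\leq\phi_C$ simply fails in general. The fix is to apply your squeeze not to $\frak F^T\phi$ but to the \emph{complete} orbits $\psi$ inside the compact attractor $\mathcal A$ already supplied by Theorem \ref{mainTT}: for these one does have $\theta_1\leq\psi\leq\theta_2$ on all of $\R$, so $\phi_c\leq\psi_{-T}\leq\phi_C$, and letting $T\to\infty$ in $b(T;c)\leq\psi(0)\leq b(T;C)$ pins $\psi\equiv b_*$. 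This gives $\mathcal A=\{b_*\}$, whence general orbits converge by the attractor property.

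\textbf{Part (a), local uniform stability.} Your sketch (continuity of $\mathcal F$ at $b_*$ plus shrinkage of $[(b_*-\delta)R(b_*-\delta),(b_*+\delta)R(b_*+\delta)]$) again implicitly sandwiches the trajectory between constant orbits issued from $b_*\pm\delta$. But closeness $|\phi-b_*|_\rho<\delta$ in the weighted $L^1$ norm gives no pointwise control of $\phi$, so the comparison with $\phi_{b_*\pm\delta}$ is unavailable. The paper handles this by contradiction: if stability fails, the uniform bounds from Theorem \ref{Th7} and Corollary \ref{cor8} let one pass to a bounded complete orbit $b_*(t)$ staying a fixed distance from $b_*$, and $[m_*,M_*]\subseteq[F(m_*),F(M_*)]$ with $m_*\geq\theta_1>0$ forces $m_*=M_*=b_*$, a contradiction. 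You would need something of this kind, since the $L^1_\rho$-to-sup-norm jump cannot be obtained by direct comparison.

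\textbf{Part (b).} The step ``a shift $\frak F^{t_1}\phi$ with $t_1\geq t_0+T$ pointwise dominates $\phi_c$'' is incorrect: for $s<-T$ one has $(\frak F^{t_1}\phi)(s)=b(t_1+s;\phi)$ with $t_1+s<t_0$, or even $=\phi(t_1+s)$, which can vanish. So the translated history does not dominate the constant function $\phi_c$ on all of $\R_-$, and monotonicity cannot be invoked. Note also that the persistence bound $\theta_1$ of Corollary \ref{cor9} is not available here because that corollary is proved under $\beta(x_m)<\mu$. The paper's route is much simpler: differentiating \eqref{RE} gives
\[
b'(t)=(\beta(x_m)-\mu)b(t)+\int_{-\infty}^t\beta'(\cdots)g(\cdots)e^{-\mu(t-a)}b(a)\,da\geq 0
\]
when $\beta(x_m)\geq\mu$, so $b$ is nondecreasing, and a finite limit would be a positive root of $R(b)=1$, which \textbf{(M)} forbids.

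\textbf{Part (c).} Your argument for the critical case $R(0)=1$ is more elaborate than necessary and has a weak point: under \textbf{(M)} the function $\beta$ is only increasing, not strictly increasing, so equality $\beta\big(x_m+\int_0^a g(\eta)\,d\tau\big)=\beta(x_m+ag(0))$ need not force $\int_0^a g(\eta)\,d\tau=ag(0)$; a plateau of $\beta$ at the relevant height would block the conclusion, and ruling that out would require extra work. The paper's route is shorter and bypasses the issue entirely: extract a complete bounded orbit as you did, set $m=\inf b_\infty$, $M=\sup b_\infty$, and apply Corollary \ref{cor7}: $M\leq F(M)=MR(M)$ forces $R(M)\geq1$, while $R$ decreasing with $R(0)\leq1$ gives $R(M)\leq1$, hence $R(M)=1$. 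Since $R^{-1}(1)$ is either $\{0\}$ (when $R(0)=1$) or empty (when $R(0)<1$), we get $M=0$, i.e. $B_\infty=0$, directly. You already have the Corollary \ref{cor7} inclusion at your disposal; using it here would make part~(c) both correct and short.

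In summary, your single-orbit ``constant data'' tool is sound and elegant for constant initial data (and for complete orbits inside the attractor), but applying it to arbitrary $\phi\in L^1_\rho$ via shifted histories is where the argument breaks down in parts (a) and (b). The paper avoids this difficulty through Corollary \ref{cor7} and limiting complete orbits (part a), and through the sign of $b'$ (part b).
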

Note that the type of local stability used in the statement (a) of the above theorem is stronger than that one used in   \cite{FBCD} where the closeness  $|b_t-\kappa|_\rho < \epsilon$, $t \geq0$,  in the weighted integral norm $|\cdot|_\rho$ is obtained instead of the closeness in the uniform norm. 

Finally, let us  add a few words about the organisation of the paper. In the next section, we analyse the monotonicity property 
of the operator ${\mathcal F}$.  Theorems \ref{mainTT} and \ref{mainT} are proved 
in the Sections \ref{UB}. The proofs of Propositions \ref{P1} and \ref{P2}  are outlined in Appendix.

\section{Monotonicity of the operator ${\mathcal F}$}  \label{Ex1} 

 \begin{thm}\label{Th1}
Suppose that $\beta$ is an increasing continuous function of polynomial growth and   $g:\R_+ \to \R_+$ is continuous  and bounded. 
Then the operator  ${\mathcal F}: L^1_\rho(\R_{-}, \R_+) \to \R_+$ defined by (\ref{OF}) is increasing with respect of the natural order in $L^1_\rho(\R_{-}, \R_+)$ for each $\rho < \mu$. 
\end{thm}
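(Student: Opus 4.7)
The plan is to reduce the monotonicity claim to a single sign analysis by exposing the structural dependence of the inner integrand on $\phi$. First I would verify that $\mathcal{F}$ is well defined on $L^1_\rho(\R_{-},\R_{+})$: since $g\le\|g\|_\infty$, the argument of $\beta$ in (\ref{OF}) lies in $[x_m,x_m+\|g\|_\infty a]$, so the polynomial growth of $\beta$ together with the decaying weight $e^{-\mu a}$ and the condition $\rho<\mu$ renders the integrand integrable on $\R_{+}$ for every $\phi\in L^1_\rho(\R_{-},\R_{+})$.

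For the monotonicity itself, take $\phi_1,\phi_2\in L^1_\rho(\R_{-},\R_{+})$ with $\phi_1\le\phi_2$ a.e.\ on $\R_{-}$. Setting $I(a;\phi):=\int_a^\infty e^{-\mu s}\phi(-s)\,ds$ and $h(a;\phi):=\int_0^a g(e^{-\mu(\tau-a)}I(a;\phi))\,d\tau$, I would split
$$
\mathcal{F}\phi_2-\mathcal{F}\phi_1=\int_0^\infty\!\beta(x_m+h(a;\phi_2))\,e^{-\mu a}(\phi_2(-a)-\phi_1(-a))\,da+\int_0^\infty\!\bigl[\beta(x_m+h(a;\phi_2))-\beta(x_m+h(a;\phi_1))\bigr]e^{-\mu a}\phi_1(-a)\,da.
$$
The first summand is manifestly non-negative because $\phi_2-\phi_1\ge 0$ and $\beta\ge 0$ on $[x_m,\infty)$.

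The heart of the argument is to prove that the second summand is also non-negative; by monotonicity of $\beta$ this reduces to showing $h(a;\phi_2)\ge h(a;\phi_1)$ for every $a\ge 0$. Here I would perform the change of variables $v=e^{-\mu(\tau-a)}I(a;\phi)$, which rewrites
$$
h(a;\phi)=\int_{I(a;\phi)}^{e^{\mu a}I(a;\phi)}\frac{g(v)}{\mu v}\,dv=G(e^{\mu a}I(a;\phi))-G(I(a;\phi)),\qquad G'(v):=\frac{g(v)}{\mu v}.
$$
Since $\phi\mapsto I(a;\phi)$ is an increasing linear functional, the monotonicity of $h(a;\cdot)$ in $\phi$ then reduces to that of the single-variable map $u\mapsto G(e^{\mu a}u)-G(u)$ on $\R_{+}$, whose derivative equals $(g(e^{\mu a}u)-g(u))/(\mu u)$.

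I expect this final sign analysis of $g(e^{\mu a}u)-g(u)$ for $a\ge 0$ to be the main obstacle, as it hinges on a delicate interplay between the dilation factor $e^{\mu a}\ge 1$ and the shape of $g$ (it is automatic when $g$ is monotone, but requires a more subtle use of the positivity of $g$ and the structure of the integrand in general). Once this monotonicity of $h(a;\cdot)$ is in place, combining it with the increasing character of $\beta$ yields the non-negativity of the second summand, which together with the first step delivers $\mathcal{F}\phi_1\le\mathcal{F}\phi_2$ and completes the proof.
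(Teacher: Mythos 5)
Your decomposition of $\mathcal{F}\phi_2-\mathcal{F}\phi_1$ into two summands does not close, and the obstacle you flag at the end is not a technicality but a genuine dead end. You correctly compute that the derivative of $u\mapsto G(e^{\mu a}u)-G(u)$ is $\bigl(g(e^{\mu a}u)-g(u)\bigr)/(\mu u)$, so $h(a;\cdot)$ is increasing in $I(a;\phi)$ exactly when $g$ is \emph{increasing}. But in the regime the paper actually needs (hypothesis {\bf (M)}, where $g$ is decreasing), one has $g(e^{\mu a}u)\le g(u)$ for $a\ge 0$, hence $h(a;\phi_2)\le h(a;\phi_1)$, and your second summand is \emph{non-positive}. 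The split therefore proves nothing: you would have to show that the first (non-negative) summand dominates the second in absolute value, which is a quantitative estimate you have not attempted and which is not easy to obtain in this form. Saying ``it is automatic when $g$ is monotone'' is half wrong---it is automatic for increasing $g$ and fails for decreasing $g$, which is the case of interest.

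The paper avoids this trap by a change of variables rather than a pointwise decomposition. Setting $H(u)=\int_{-\infty}^u\phi(s)e^{\mu s}\,ds$ and substituting $\theta=H(u)$ (after first passing to $u=-a$ and $s'=\tau+u$), the operator is rewritten as
\begin{equation*}
\mathcal{F}\phi=\int_0^{H(0)}\beta\Bigl(x_m+\int_{H^{-1}(\theta)}^0 g(e^{-\mu s}\theta)\,ds\Bigr)\,d\theta,
\end{equation*}
in which $\phi$ enters \emph{only} through the outer upper limit $H(0)$ and the inner lower limit $H^{-1}(\theta)$. If $\phi_2\ge\phi_1>0$ a.e.\ then $H_2\ge H_1$, so $H_2(0)\ge H_1(0)$ and $H_2^{-1}(\theta)\le H_1^{-1}(\theta)\le 0$; since $g\ge 0$ and $\beta$ is increasing and non-negative, both effects increase the integral, with no sign of $g(e^{\mu a}u)-g(u)$ ever appearing. (The general case $\phi_i\ge 0$ is then handled by approximating with $\phi_i+\delta$ and letting $\delta\to 0^+$.) The cancellation that your decomposition leaves unresolved is precisely what this substitution makes invisible. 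I would recommend abandoning the two-term split and adopting the $\theta=H(u)$ change of variables.
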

\begin{proof} First we assume that $\phi(s) >0$ a.e. on $\R_-$.  Set $h(s)=\phi(s)e^{\mu s}$ and $\theta = H(u)=\int_{-\infty}^uh(s)ds$,  a  straightforward computation shows  that 
$$
{\mathcal F}\phi=\int_0^\infty \beta\left( x_m + \int_0^a g\left( e^{-\mu(\tau -a)} \int_a^\infty e^{-\mu s} \phi(-s)\,ds \right) d\tau \right) e^{-\mu a} \phi(-a)da= 
$$
$$\int_{-\infty}^0 \beta\left( x_m + \int^0_u g( e^{-\mu s}H(u))ds\right)h(u)du =
\int_{0}^{H(0)}\beta\left( x_m + \int^0_{H^{-1}(\theta)} g( e^{-\mu s}\theta)ds\right)d\theta. 
$$
In general, the latter integral is an improper Riemann integral (can have a singularity at $\theta =0$) and $H(u)$ is strictly increasing and positive on $\R$,  with $H(-\infty)=0$.  

If $\phi_2(s) \geq \phi_1(s) >0$, almost everywhere on $\R_-$, then clearly 
$$
H_2(u)=\int_{-\infty}^uh_2(s)ds \geq H_1(u)=\int_{-\infty}^uh_1(s)ds, \quad u \leq 0, 
$$
so that 
$$
H_2^{-1}(\theta) \leq H_1^{-1}(\theta) \leq 0,    \quad 0 <\theta \leq H_1(0).  
$$
Consequently,  
$$ {\mathcal F}\phi_2 = \int_{0}^{H_2(0)}\beta\left( x_m + \int^0_{H_2^{-1}(\theta)} g( e^{-\mu s}\theta)ds\right)d\theta \geq 
$$
$$\int_{0}^{H_1(0)}\beta\left( x_m + \int^0_{H_1^{-1}(\theta)} g( e^{-\mu s}\theta)ds\right)d\theta = {\mathcal F}\phi_1.
$$
Now, if  $\phi_2(s) \geq \phi_1(s) \geq 0$, then the above argument shows that 
${\mathcal F}(\phi_2 +\delta) \geq {\mathcal F}(
\phi_1 +\delta)$ for each positive constant function $\delta$.  Since 
${\mathcal F}(\phi_j +\delta)$ depends continuously on $\delta$ we obtain that ${\mathcal F}\phi_2 \geq {\mathcal F}
\phi_1$ by considering $\delta \to 0^+$. 
This completes the proof of Theorem \ref{Th1}. \qed
\end{proof}
\begin{cor} For $b\geq 0$ set $F(b)= bR(b)$. If $\beta$ and $g$ satisfies all assumptions of Theorem \ref{Th1} and $\beta(x_m+x) >0$ for $x >0$ then $F(b)$ is a strictly increasing function. 
\end{cor}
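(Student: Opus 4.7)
The plan is to reduce the corollary to Theorem~\ref{Th1} by evaluating the operator $\mathcal F$ on constant functions. For $b\geq 0$ let $\phi_b(s)\equiv b$ be the constant function in $L^1_\rho(\R_-,\R_+)$. A short computation gives $\int_a^\infty e^{-\mu s}\phi_b(-s)\,ds=(b/\mu)e^{-\mu a}$, hence $e^{-\mu(\tau-a)}\int_a^\infty e^{-\mu s}\phi_b(-s)\,ds=(b/\mu)e^{-\mu\tau}$, and so $\mathcal F\phi_b=bR(b)=F(b)$. Since $\phi_{b_1}\leq \phi_{b_2}$ in $L^1_\rho(\R_-,\R_+)$ whenever $b_1\leq b_2$, Theorem~\ref{Th1} already yields the weak monotonicity $F(b_1)\leq F(b_2)$.

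The real work is upgrading $\leq$ to $<$. For this I would reuse the explicit integral representation derived inside the proof of Theorem~\ref{Th1}. With $\phi=\phi_b>0$, we have $h(s)=be^{\mu s}$ and $H_b(u)=(b/\mu)e^{\mu u}$, so $H_b(0)=b/\mu$ and $H_b^{-1}(\theta)=(1/\mu)\ln(\mu\theta/b)$, which gives
$$
F(b)=\int_0^{b/\mu}\beta\left(x_m+\int_{H_b^{-1}(\theta)}^0 g(e^{-\mu s}\theta)\,ds\right)d\theta.
$$
For $0<b_1<b_2$ I would split $F(b_2)-F(b_1)$ into the part over $[0,b_1/\mu]$, which is non-negative by the very argument used in Theorem~\ref{Th1} (since $H_{b_2}^{-1}(\theta)<H_{b_1}^{-1}(\theta)$ makes the inner integral larger and $\beta$ is increasing), and the part over $(b_1/\mu,b_2/\mu]$. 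The case $b_1=0$ reduces to comparing $F(0)=0$ with $F(b_2)$, handled in the same way.

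The key obstacle, and the place where the new hypothesis $\beta(x_m+x)>0$ for $x>0$ is used, is showing that the integral over $(b_1/\mu,b_2/\mu]$ is strictly positive. For each $\theta$ in this interval, $H_{b_2}^{-1}(\theta)<0$ and positivity of $g$ force $\int_{H_{b_2}^{-1}(\theta)}^0 g(e^{-\mu s}\theta)\,ds>0$, so the argument of $\beta$ strictly exceeds $x_m$; the hypothesis then gives $\beta(\cdot)>0$, and continuity of the integrand closes the estimate. This strict contribution on the exposed interval $(b_1/\mu,b_2/\mu]$ — a direct consequence of the strict gap $H_{b_1}(0)<H_{b_2}(0)$ produced by comparing distinct constants — is what promotes the weak monotonicity from Theorem~\ref{Th1} to the strict monotonicity of $F$.
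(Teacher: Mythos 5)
Your proposal is correct and matches the paper's (one-line) proof: both rely on the change-of-variables representation from Theorem~\ref{Th1} and observe that $H_{b_2}(0)=b_2/\mu>b_1/\mu=H_{b_1}(0)$ exposes an extra interval of integration on which the added hypothesis $\beta(x_m+x)>0$ for $x>0$ forces a strictly positive contribution. One small caveat (shared with the paper): the strict positivity of $\int_{H_{b_2}^{-1}(\theta)}^0 g(e^{-\mu s}\theta)\,ds$ tacitly uses $g>0$, whereas Theorem~\ref{Th1} as stated only assumes $g\ge 0$.
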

\begin{proof} Indeed,  ${\mathcal F}b=bR(b)$ while for $b_2 > b_1$ we have $H_2(0) = b_2/\mu > H_1(0) = b_1/\mu$.  \qed 
\end{proof}
\begin{cor} \label{cor7} Assume that $b(t)$ is a bounded continuous solution of the integral equation (\ref{RE}) (i.e.  $b(t)={\mathcal F}b_t$, $ t \in \R$, where we use the standard notation $b_t(s)=b(t+s), s \leq 0$).  Set 
$m=\inf_{t \in \R} b(t) \leq M=\sup_{t \in \R} b(t)$.   If $\beta$ and $g$ satisfies all assumptions of Theorem \ref{Th1}, then $[m,M] \subseteq [F(m),F(M)]$: 
$$
F(m)\leq m\leq M \leq F(M). 
$$
\end{cor}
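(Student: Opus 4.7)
The plan is to view $m$ and $M$ as constant functions sitting in $L^1_\rho(\R_-,\R_+)$, squeeze the history $b_t$ between them in the natural pointwise order, and then apply the monotonicity of $\mathcal F$ from Theorem \ref{Th1}.

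First I would verify the identity $\mathcal F c = F(c)$ whenever $\phi \equiv c \geq 0$ is a constant function. This is a direct substitution: with $\phi(-s)\equiv c$ one gets $\int_a^\infty e^{-\mu s}c\,ds = c e^{-\mu a}/\mu$, hence $e^{-\mu(\tau-a)}\cdot c e^{-\mu a}/\mu = c e^{-\mu\tau}/\mu$, so the inner $g$-term is independent of $a$ and one recovers
\[
\mathcal F c \;=\; c\int_0^\infty \beta\!\left( x_m + \int_0^a g\!\left( \tfrac{e^{-\mu\tau}}{\mu}c\right)d\tau\right) e^{-\mu a}\,da \;=\; cR(c)\;=\;F(c).
\]
Constant functions obviously belong to $L^1_\rho(\R_-,\R_+)$ for any $\rho>0$, so this computation is legitimate.

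Next, the boundedness of $b$ gives $m \leq b(t+s) \leq M$ for all $s\leq 0$, i.e.\ $m\leq b_t\leq M$ in the natural order of $L^1_\rho(\R_-,\R_+)$. Applying Theorem \ref{Th1} to the pair $(m,b_t)$ and to the pair $(b_t,M)$ yields, at every $t\in\R$,
\[
F(m)\;=\;\mathcal F m \;\leq\; \mathcal F b_t \;=\; b(t) \;\leq\; \mathcal F M \;=\; F(M).
\]
Taking the infimum and supremum over $t\in\R$ of the inequality $F(m)\leq b(t)\leq F(M)$ produces $F(m)\leq m$ and $M\leq F(M)$; combined with the trivial $m\leq M$ this is exactly the chain $F(m)\leq m\leq M\leq F(M)$.

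There is no real obstacle: the only point requiring any care is checking that $\mathcal F$ can indeed be evaluated on the constant functions $m,M$ (which is immediate, since they lie in $L^1_\rho(\R_-,\R_+)$) and that the reduction $\mathcal F c = F(c)$ is honest. The rest is a one-line sandwich argument based on Theorem \ref{Th1}.
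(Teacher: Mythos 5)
Your argument is correct and is exactly the sandwich argument that Corollary \ref{cor7} is implicitly asking for; the paper itself states it with no proof, treating it as an immediate consequence of Theorem \ref{Th1} and the identity $\mathcal F c = cR(c)=F(c)$ for constants, which is precisely how you proceed. The only caveat worth keeping in mind is that the phase space parameter must satisfy $0<\rho<\mu$ for constant functions to lie in $L^1_\rho(\R_-,\R_+)$ (which is the regime the paper works in), and that Theorem \ref{Th1} already covers the boundary case $m=0$ via the limiting argument $\delta\to 0^+$ in its proof, so no positivity assumption on $m$ is needed.
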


\section{Uniform ultimate boundedness and uniform persistence of solutions.  Proofs of Theorems \ref{mainTT} and  \ref{mainT}}\label{UB}
\begin{thm} \label{Th7}Assume that 
$\beta(x_m+x)\leq \beta(x_m)+cx$ for some $c >0$ and for all $x \geq 0$, $\beta(x_m)< \mu$,  that $g$ is a non-negative continuous function with  $g(+\infty)=0$.   Let $b(t)$ denote continuous solution of the initial value problem $b_0=\phi \in L^1_\rho(\R_{-}, \R_+)$ for equation  (\ref{RE}).  Then there are constant $\alpha_j>0$ depending only on $\beta, g, \mu$ such that
$$
\limsup_{t \to +\infty} b(t) \leq \alpha_1 \quad  \mbox{and}\quad  0 \leq  b(t) \leq \alpha_2|\phi|_\rho + \alpha_3, \quad t \geq 0. 
$$
\end{thm}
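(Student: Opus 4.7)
My plan rests on a change-of-variables identity that turns a potentially unbounded contribution of $g$ into a universally bounded constant.

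Starting from the equation, the linear majorization $\beta(x_m+y)\le\beta(x_m)+cy$ gives
\[ b(t) \le \beta(x_m) J(t) + c\int_0^\infty G_a(t) e^{-\mu a} b(t-a)\,da, \quad J(t):=\int_0^\infty e^{-\mu a} b(t-a)\,da. \]
Setting $U(t,a):=\int_a^\infty e^{-\mu s}b(t-s)\,ds$, one has $\partial_a U(t,a)=-e^{-\mu a}b(t-a)$, $U(t,0)=J(t)$ and $U(t,+\infty)=0$. The substitution $w=e^{\mu(a-\tau)}U(t,a)$ in the $\tau$-integral defining $G_a(t)$ yields
\[ G_a(t) = \frac{1}{\mu}\int_{U(t,a)}^{e^{\mu a}U(t,a)} \frac{g(w)}{w}\,dw. \]
For any $\epsilon>0$, choose $R$ with $g(w)<\epsilon$ for $w\ge R$; splitting at $w=R$ and using $G_{\max}:=\sup g<\infty$ yields the pointwise bound
\[ G_a(t) \le \epsilon\,a + \frac{G_{\max}}{\mu}\,\log^+\!\bigl(R/U(t,a)\bigr). \]

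The crucial step is the identity
\[ \int_0^\infty F(U(t,a))\,e^{-\mu a}b(t-a)\,da = \int_0^{J(t)} F(u)\,du, \]
valid for any non-negative $F$ as an immediate consequence of $\partial_a U = -e^{-\mu a}b(t-a)$ via the change of variable $u=U(t,a)$. Applied to $F(u)=\log^+(R/u)$, a direct computation shows $\int_0^{J(t)}\log^+(R/u)\,du \le R$ uniformly in $t$. Combined with the elementary bounds $J(t)\le|b_t|_\rho$ and $\int_0^\infty a e^{-\mu a} b(t-a)\,da \le [e(\mu-\rho)]^{-1}|b_t|_\rho$ valid for $\rho<\mu$, this produces
\[ b(t) \le \Lambda_\epsilon\,|b_t|_\rho + M_\epsilon, \quad \Lambda_\epsilon:=\beta(x_m)+\frac{c\epsilon}{e(\mu-\rho)}, \ \ M_\epsilon:=\frac{cRG_{\max}}{\mu}. \]

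Using $\frac{d}{dt}|b_t|_\rho = b(t)-\rho|b_t|_\rho$ produces the differential inequality $\frac{d}{dt}|b_t|_\rho \le (\Lambda_\epsilon-\rho)|b_t|_\rho + M_\epsilon$. Since $\beta(x_m)<\mu$, I fix $\rho'\in(\beta(x_m),\mu)$ with $\rho'\ge\rho$ (this depends only on $\beta,\mu$), then choose $\epsilon$ small enough that $\Lambda_\epsilon<\rho'$. Gronwall, applied with $\rho$ replaced by $\rho'$, yields $|b_t|_{\rho'}\le e^{-\kappa t}|\phi|_{\rho'} + M_\epsilon/\kappa$ with $\kappa:=\rho'-\Lambda_\epsilon>0$, and since $|\phi|_{\rho'}\le|\phi|_\rho$, substituting back into $b(t)\le\Lambda_\epsilon|b_t|_{\rho'}+M_\epsilon$ gives both the uniform bound $b(t)\le\alpha_2|\phi|_\rho+\alpha_3$ and the ultimate bound $\limsup_{t\to+\infty} b(t)\le\alpha_1$, with all constants depending only on $\beta,g,\mu$.

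The main obstacle is the integral identity: individually, $\log^+(R/U(t,a))$ can be arbitrarily large when $U(t,a)$ is small for large $a$, yet its integral against the weight $e^{-\mu a}b(t-a)$ is universally $\le R$ via the change of variable $u=U(t,a)$. Without this observation, one is forced to use the crude bound $G_a(t)\le G_{\max}a$, which produces a Volterra kernel of $L^1$-norm $\beta(x_m)/\mu+cG_{\max}/\mu^2$ that can exceed $1$ and hence yields at best exponential, not uniform, estimates on $b$.
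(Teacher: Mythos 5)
Your proof is correct and follows a genuinely different route from the paper. Both arguments exploit the two hypotheses $\beta(x_m)<\mu$ and $g(+\infty)=0$ in an essential way, but the mechanisms differ. The paper works in the Volterra form, rewrites the troublesome double integral using Fubini and the fundamental theorem of calculus so that it appears through the primitive $G(x)=\int_0^x g$, uses $g(+\infty)=0$ to get the sublinear bound $G(x)\le C_\varepsilon+\varepsilon x$, and then reduces the resulting linear Volterra inequality for $B(t)=e^{\mu t}b(t)$ to an explicit second-order constant-coefficient ODE which it solves. Your argument instead keeps the inner $\tau$-integral and makes two exact substitutions: $w=e^{\mu(a-\tau)}U(t,a)$ turns the inner integral into $\frac{1}{\mu}\int g(w)/w\,dw$, and $u=U(t,a)$ (with $\partial_a U=-e^{-\mu a}b(t-a)$) converts the outer $a$-integral against the weight $e^{-\mu a}b(t-a)$ into a plain one-dimensional integral $\int_0^{J(t)}F(u)\,du$. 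Splitting $g$ at a level $R$ where $g<\epsilon$, the first change of variables produces the decomposition $G_a(t)\le \epsilon a+\frac{G_{\max}}{\mu}\log^+(R/U(t,a))$; the second change of variables then yields the uniform-in-$t$ bound $\int_0^{J(t)}\log^+(R/u)\,du\le R$, turning the potentially divergent $\log^+$ term into a bounded constant. This leads to the direct pointwise estimate $b(t)\le \Lambda_\epsilon|b_t|_{\rho'}+M_\epsilon$ with $\Lambda_\epsilon<\rho'<\mu$, and the differential identity $\frac{d}{dt}|b_t|_{\rho'}=b(t)-\rho'|b_t|_{\rho'}$ closes the argument by a first-order Gronwall inequality rather than a second-order ODE. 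A small point of hygiene: the choice of $\rho'$ has to satisfy both $\rho'>\beta(x_m)$ and $\rho'\ge\rho$ (the latter so that $|\phi|_{\rho'}\le|\phi|_\rho$), so when $\rho$ is close to $\mu$ the constants $\alpha_2,\alpha_3$ inherit a dependence on $\rho$ — but the same dependence is present in the paper's $A$, and the $\limsup$ bound $\alpha_1$ can always be produced with $\rho'=(\beta(x_m)+\mu)/2$ since the $e^{-\kappa t}|\phi|_{\rho'}$ term vanishes in the limit. What your approach buys is a shorter path to a first-order estimate and a more structural explanation of why the smallness of $g$ at infinity defeats the unbounded factor $a$ in the kernel; what the paper's route buys is an explicit formula for $\alpha_1$ in terms of $C_\varepsilon$.
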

\begin{proof}
Consider  the Volterra form $b(t) =  (\mathcal{V}_\phi b)(t) + r_\phi(t)$ of  equation  (\ref{RE}). Note that
\begin{align*}
& (\mathcal{V}_\phi b)(t) = \int_0^t \beta\left( x_m + \int_0^{t-a} g\left( e^{-\mu(\tau -t+a)}\int_{t-a}^\infty e^{-\mu s}b(t-s)ds \right) d\tau \right) e^{-\mu(t-a)}b(a)da =\\
&e^{-\mu t}\int_0^t \beta\left( x_m+\int_0^{t-a} g\left( e^{-\mu(\tau -t+a)}\int_{-\infty}^a e^{-\mu(t-s)}b(s)ds \right) d\tau \right) e^{\mu a}b(a)da = \\
&e^{-\mu t}\int_0^t \beta\left( x_m + \int_a^t g\left( e^{-\mu\tau}\int_{-\infty}^a  e^{\mu s}b(s)ds \right) d\tau \right) e^{\mu a}b(a)da \leq \\
&e^{-\mu t} \int_0^t \left[ \beta(x_m) + c\int_a^t g\left( e^{-\mu \tau}\int_{-\infty}^a e^{\mu s}b(s)ds \right) d\tau \right] e^{\mu a}b(a)da =   \\
& \beta(x_m)\int_0^t e^{-\mu(t-a)} b(a)da  + ce^{-\mu t}\int_0^t\int_a^t g\left(e^{-\mu\tau}\int_{-\infty}^a  e^{\mu s}b(s)ds \right)e^{\mu a}b(a)d\tau da=: J_1+ J_2.
\end{align*}
Then setting  $G(x)=\int_0^x g(t)dt$, we find that 
\begin{align*}
J_2 &= ce^{-\mu t}\int_0^t \int_0^\tau g\left( e^{-\mu\tau}\int_{-\infty}^a e^{\mu s}b(s)ds \right) e^{\mu a}b(a)dad\tau= \\
&ce^{-\mu t}\int_0^t e^{\mu \tau}\int_0^\tau \frac{d}{da}\left[ G\left( e^{-\mu\tau}\int_{-\infty}^a e^{\mu s}b(s)ds \right) \right] da~d\tau=\\
&ce^{-\mu t}\int_0^t e^{\mu\tau} \left[ G\left( e^{-\mu\tau}\int_{-\infty}^\tau e^{\mu s}b(s)ds \right) - G\left( e^{-\mu\tau}\int_{-\infty}^0 e^{\mu s}b(s)ds \right) \right] d\tau =\\
&c\int_0^t e^{-\mu(t-\tau)}G\left( e^{-\mu\tau}|\phi|_{\mu} + \int_0^\tau e^{-\mu(\tau-s)}b(s)ds \right) d\tau - c\int_0^t e^{-\mu(t-\tau)}G(e^{-\mu\tau}|\phi|_{\mu})d\tau. 
\end{align*}
Next, 
\begin{align*}
&r_\phi(t)e^{\mu t} = \int_t^\infty \beta\left( x_m + \int_0^a g\left( e^{-\mu(\tau -a)} \int_a^\infty e^{-\mu s} \phi(t-s)ds \right) d\tau \right) e^{\mu(t- a)} \phi(t-a)da \\
	&=\int_{-\infty}^0 \beta\left( x_m + \int_a^t g\left( e^{-\mu\tau} \int_{-\infty}^a e^{\mu s}\phi(s)ds \right) d\tau \right) e^{\mu a}\phi(a)da  \leq \\
	&\beta(x_m)  \int_{-\infty}^0 e^{\mu a}\phi(a)da+ c\int_{-\infty}^0\int_a^t g\left( e^{-\mu \tau} \int_{-\infty}^a e^{\mu s}\phi(s)ds \right) e^{\mu a}\phi(a)d\tau da =
	\end{align*}
	\begin{align*}
	& \beta(x_m)|\phi|_\mu  + c\int_0^t \int_{-\infty}^0 g\left( e^{-\mu \tau} \int_{-\infty}^a e^{\mu s}\phi(s)ds \right) e^{\mu a}\phi(a)dad\tau \\
	& \quad + c \int_{-\infty}^0 \int_{-\infty}^\tau g\left( e^{-\mu \tau} \int_{-\infty}^a e^{\mu s}\phi(s)ds \right) e^{\mu a}\phi(a)~dad\tau=\\
	& \beta(x_m) |\phi|_\mu  + c\int_0^t e^{\mu \tau} G(e^{-\mu\tau}|\phi|_\mu)d\tau
	+ c\int_{-\infty}^0 e^{\mu\tau} G\left( e^{-\mu\tau} \int_{-\infty}^\tau e^{\mu s}\phi(s)ds \right)d\tau,
\end{align*}
because of  $\lim_{a\to -\infty} G\left( e^{-\mu\tau} \int_{-\infty}^a e^{\mu s}\phi(s)~ds \right) = G(0)=0$. 

Consequently, 
$$
	e^{\mu t}b(t) \leq A+ \beta(x_m)\int_0^t e^{\mu \tau} b(\tau)d\tau + c\int_0^t e^{\mu\tau}G\left(e^{-\mu\tau}|\phi|_\mu + e^{-\mu\tau}\int_0^\tau e^{\mu s}b(s)ds  \right)d\tau,
$$
where 
$$A= \beta(x_m)|\phi|_\mu + c\int_{-\infty}^0 e^{\mu \tau} G\left( e^{-\mu\tau} \int_{-\infty}^\tau e^{\mu s}\phi(s)ds \right)d\tau.
$$
Setting $B(t)= e^{\mu t}b(t)$, we can rewrite the latter inequality  as 
\[
	B(t) \leq A + \beta(x_m)\int_0^t B(\tau)d\tau + c\int_0^t e^{\mu \tau} G\left( e^{-\mu \tau}|\phi|_\mu + e^{-\mu \tau}\int_0^\tau B(s)ds \right)d\tau.
\]
Now, since $g(+\infty) =0$, we find that $G(x)/x \to 0$ as $x\to \infty$. This implies that for each $\varepsilon\in (0,1)$ there exists a constant $C_\varepsilon$ such that $G(x) \leq C_{\varepsilon} + \varepsilon x$ for all $x\geq 0$. 
Thus 
\begin{align*}
	&\int_0^t e^{\mu \tau} G\left( e^{-\mu \tau}|\phi|_\mu + e^{-\mu \tau}\int_0^\tau B(s)ds \right)d\tau \leq \int_0^t e^{\mu \tau} \left[ C_\varepsilon + \varepsilon e^{-\mu \tau}|\phi|_\mu + \varepsilon e^{-\mu \tau}\int_0^\tau B(s)ds \right] d\tau\\
	& = {C_\varepsilon}\mu^{-1}(e^{\mu t}-1) + \varepsilon |\phi|_\mu t + \varepsilon \int_0^t \int_0^\tau B(s)dsd\tau,
\end{align*}
$$
A   \leq 
	\left(\beta(x_m)+
\frac{c}{\mu -\rho}\right)|\phi|_\rho + \frac{cC_\varepsilon}{\mu}, 
$$
so that 
\[
	B(t) \leq A + \varepsilon c|\phi|_\mu  t + cC_\varepsilon \mu^{-1} (e^{\mu t}-1) + \beta(x_m)\int_0^t B(\tau)d\tau + \varepsilon c \int_0^t \int_0^\tau B(s)dsd\tau =
\]
\[
	A + \varepsilon c|\phi|_\mu  t + cC_\varepsilon \mu^{-1} (e^{\mu t}-1)  + \int_0^t [\beta(x_m) + \varepsilon c(t-s)] B(s)ds, \quad t \geq 0.
\]
Iterating this inequality where the right-hand side define a Volterra type operator, we find that 
 $B(t) \leq \xi(t)$, where $\xi(t)$ is a positive function  satisfying the integral equation
\[
	\xi(t) = A + \varepsilon c|\phi|_\mu  t + cC_\varepsilon \mu^{-1} (e^{\mu t}-1)  + \int_0^t [\beta(x_m) + \varepsilon c(t-s)] \xi(s)ds, \quad t \geq 0. 
\]
Clearly, $\xi(t)$ also solves the initial value problem
\begin{equation*}
	\begin{cases} 	\xi''(t) = \mu c C_\varepsilon e^{\mu t} + \beta(x_m)\xi'(t) + \varepsilon c\xi(t) \\
	 \xi(0)=A,~ \xi'(0)=\varepsilon c|\phi|_\mu  + c C_\varepsilon + \beta(x_m)A.
	 \end{cases}
\end{equation*}
Solving this problem we obtain that
\[
	\xi(t) = \alpha_1 e^{\mu t} + a_1 e^{\lambda_+ t} + a_2 e^{\lambda_{-}t},
\]
where 
  $$\alpha_1 = \frac{\mu c C_\varepsilon}{\mu^2 - \mu \beta(x_m) - \varepsilon c}, \quad 
	\lambda_\pm = \frac{\beta(x_m) \pm \sqrt{\beta^2(x_m) + 4\varepsilon c}}{2},
$$
and $a_1,~a_2$ are constants determined from the linear system
$$ a_1 + a_2 =A- \alpha_1,~  \lambda_+ a_1 + \lambda_-a_2 =-\alpha_1\mu +\varepsilon c|\phi|_\mu  + c C_\varepsilon + \beta(x_m)A.
 $$
Since $\beta(x_m)< \mu$ we can take $\varepsilon>0$ small enough to have $\lambda_{-}<\lambda_+ < \mu$. 
Thus we conclude that
$$
	b(t) \leq \alpha_1 + a_1 e^{(\lambda_+ -\mu)t} + a_2 e^{(\lambda_{-} - \mu)t} \leq \alpha_1 + |a_1|+|a_2| \leq \alpha_2|\phi|_\rho + \alpha_3, \quad t \geq 0, 
$$
$$\limsup_{t \to +\infty} b(t) \leq \alpha_1= \frac{\mu c C_\varepsilon}{\mu^2 - \mu \beta(x_m) - \varepsilon c}, $$
with some $\alpha_j$ independent of the initial data $\phi$.  \qed
\end{proof}
\begin{cor} \label{cor8}Suppose that $\beta(x)$ is a globally Lipschitzian function with the Lipschitz constant $L$ (so that $\beta(x_m+x)\leq \beta_1(x):=\beta(x_m)+Lx$ for all $x \geq 0$), that $\beta(x_m) < \mu$ and that  $g$ is a non-negative continuous function with  $g(+\infty)=0$.    Then there are constant $\alpha_4, \alpha_5>0$ depending only on $\beta, g, \mu$ and such that continuous solution $b(t)$ of the initial value problem $b_0=\phi \in L^1_\rho(\R_{-}, \R_+)$ for equation  (\ref{RE}) is differentiable on $(0,+\infty)$ and has  uniformly bounded derivative: 
\begin{equation}\label{esty}
|b'(t)| \leq \alpha_4|\phi|_\rho + \alpha_5, \quad t > 0. 
\end{equation}
\end{cor}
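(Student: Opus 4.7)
The plan is to upgrade the qualitative differentiability given by Proposition \ref{P1}(b) (the global Lipschitz hypothesis on $\beta$ clearly implies local Lipschitz, so $b\in C^1((0,+\infty))$) to a quantitative estimate on $b'(t)$ that is uniform for $t\ge 0$. The starting point is the key identity extracted inside the proof of Theorem \ref{Th7}: set $b(s)=\phi(s)$ for $s\le 0$, $J(t):=\int_{-\infty}^t e^{\mu s}b(s)\,ds$, $F(t):=e^{\mu t}b(t)=J'(t)$, and $K(t,a):=\int_a^t g\bigl(e^{-\mu\tau}J(a)\bigr)\,d\tau$. Combining the rewritten expressions for $e^{\mu t}(\mathcal V_\phi b)(t)$ and $e^{\mu t}r_\phi(t)$ obtained there, the equation $b=\mathcal V_\phi b+r_\phi$ collapses into the single convolution-type identity
\begin{equation*}
F(t)=\int_{-\infty}^{t}\beta\bigl(x_m+K(t,a)\bigr)\,J'(a)\,da,\qquad t\ge 0.
\end{equation*}

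I would then estimate the increment $F(t+h)-F(t)$ for $h>0$ by splitting the domain of integration into $[t,t+h]$ and $(-\infty,t]$. On the first piece one has $0\le K(t+h,a)\le h\|g\|_\infty$, so the Lipschitz bound yields $\beta(x_m+K(t+h,a))\le \beta(x_m)+Lh\|g\|_\infty$; together with $\int_t^{t+h} J'(a)\,da=J(t+h)-J(t)$ and $J'(t)=F(t)$, this piece contributes exactly $\beta(x_m)F(t)$ in the limit $h\to 0^+$. On the second piece, the bound $|K(t+h,a)-K(t,a)|\le h\|g\|_\infty$ combined with the global Lipschitz estimate for $\beta$ and $\int_{-\infty}^t J'(a)\,da=J(t)$ produces an $O(h)$ contribution of size at most $Lh\|g\|_\infty J(t)$. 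Passing to the limit and invoking the differentiability of $F$ already known from Proposition \ref{P1}(b), one obtains
\begin{equation*}
|F'(t)|\le \beta(x_m)\,F(t)+L\,\|g\|_\infty\,J(t),\qquad t>0.
\end{equation*}
Differentiating $b(t)=e^{-\mu t}F(t)$ and using this inequality gives $|b'(t)|\le(\mu+\beta(x_m))\,b(t)+L\|g\|_\infty\,e^{-\mu t}J(t)$.

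The right-hand side is then controlled by Theorem \ref{Th7}: $b(t)\le \alpha_2|\phi|_\rho+\alpha_3$, while writing $J(t)=|\phi|_\mu+\int_0^t e^{\mu a}b(a)\,da$ and using the elementary bound $|\phi|_\mu\le|\phi|_\rho$ (valid since $\rho<\mu$ and $s\le 0$) yields $e^{-\mu t}J(t)\le |\phi|_\rho+(\alpha_2|\phi|_\rho+\alpha_3)/\mu$ uniformly for $t\ge 0$. Substituting produces the desired inequality $|b'(t)|\le\alpha_4|\phi|_\rho+\alpha_5$ with $\alpha_4,\alpha_5$ depending only on $\beta$, $g$, $\mu$. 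The single delicate point that shapes the entire plan is that $\beta$ is only globally Lipschitz, not $C^1$, so one cannot legitimately differentiate under the integral sign in the defining identity for $F$; the finite-difference analysis outlined above is precisely designed to avoid invoking $\beta'$ while still extracting the sharp estimate for $|F'(t)|$ from the Lipschitz constant $L$ alone.
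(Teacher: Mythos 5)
Your proof is correct and reaches the same estimate as the paper by essentially the same route: rewrite the renewal equation as $e^{\mu t}b(t)=\int_{-\infty}^t\beta\bigl(x_m+K(t,a)\bigr)\,e^{\mu a}b(a)\,da$, bound the variation of the argument of $\beta$ in $t$ through $\|g\|_\infty$ and the Lipschitz constant $L$, and close the loop with the a priori bound $b(t)\le\alpha_2|\phi|_\rho+\alpha_3$ from Theorem~\ref{Th7}. The one place where you genuinely diverge is worth flagging: the paper differentiates directly under the integral sign, writing $\beta'$ inside the integrand and invoking $|\beta'|\le L$ a.e., a step that is informal when $\beta$ is merely Lipschitz, since the pointwise chain rule for $\beta\circ u$ requires some care even for $u\in C^1$. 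Your finite-difference analysis of $F(t+h)-F(t)$, split over $[t,t+h]$ and $(-\infty,t]$, supplies the missing rigour: it uses only the Lipschitz modulus of $\beta$, never its derivative, yet yields the same inequality $|F'(t)|\le\beta(x_m)F(t)+L\|g\|_\infty J(t)$ once the existence of $F'(t)$ (Proposition~\ref{P1}(b)) is granted. A cosmetic difference is that the paper exploits the cancellation $\beta(x_m)b(t)-\mu b(t)$ to obtain the coefficient $(\mu-\beta(x_m))$ where you apply the triangle inequality and get $(\mu+\beta(x_m))$; since the resulting constants $\alpha_4,\alpha_5$ are universal in either case, this does not affect the statement.
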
 
\begin{proof} Note that
$$
b(t) = \int_{-\infty}^t \beta\left(x_m + \int_a^t g\left( e^{-\mu \tau} \int_{-\infty}^a e^{\mu s} b(s)ds \right) d\tau \right) e^{-\mu(t-a)}b(a)da.
$$
Since $|\beta'(x_m+x)|\leq L$ a.e. on $\R_+$ and $\phi \in L^1_{\rho}(\R_{-})$,  by differentiating the last equality, we obtain
\begin{align*}
&b'(t) = \beta(x_m)b(t) + \int_{-\infty}^t \frac{d}{dt}\left[ \beta\left(x_m + \int_a^t g\left( e^{-\mu \tau} \int_{-\infty}^a e^{\mu s} b(s)ds\right) d\tau \right) e^{-\mu(t-a)}b(a) \right] da \\
&= \beta(x_m)b(t) -\mu b(t) +\\
&  \int_{-\infty}^t \beta'\left( x_m + \int_a^t g\left( e^{-\mu \tau} \int_{-\infty}^a e^{\mu s}b(s)ds\right)d\tau\right)g\left( e^{-\mu t}\int_{-\infty}^a e^{\mu s}b(s)ds\right) e^{-\mu (t-a)}b(a)da.
\end{align*}
Thus, using the estimate $ 0 \leq  b(t) \leq \alpha_2|\phi|_\rho + \alpha_3, \quad t \geq 0,$ we find that 
\begin{align*}
	|b'(t)| &\leq (\mu-\beta(x_m))(\alpha_2|\phi|_\rho + \alpha_3) + L\max_{x \geq 0}g(x)e^{-\mu t} \left[ |\phi|_\rho + \int_0^t e^{\mu a}b(a)da \right]\\
	&\leq (\mu-\beta(x_m))(\alpha_2|\phi|_\rho + \alpha_3) +  L\max_{x \geq 0}g(x)\left[ |\phi|_\rho + \frac{(\alpha_2|\phi|_\rho + \alpha_3)}{\mu} \right], 
\end{align*}
and the estimate (\ref{esty}) follows. \qed
\end{proof}
Now, consider the operator ${\mathcal F_1}$ defined by (\ref{OF}) where $\beta(x_m+x)$ is replaced with $\beta_1(x)= \beta(x_m)+Lx \geq \beta(x+x_m), x \geq 0$ (cf. Corollary \ref{cor8}) and $g(+\infty)=0$. Clearly, 
${\mathcal F}\phi \leq {\mathcal F_1}\phi$ for each $\phi \in L^1_\rho(\R_{-}, \R_+)$. By Theorem \ref{Th1}, the functional  ${\mathcal F_1}$ is monotone on  $L^1_\rho(\R_{-}, \R_+)$ and the monotone scalar function 
$F_1(c)= {\mathcal F_1}c$ (here $c$ is considered as a constant element of $L^1_\rho(\R_{-}, \R_+)$) has the representation $F_1(c)= cR_1(c)$ where 
$$
R_1(c):=\int_0^\infty \left(\beta(x_m) + L\int_0^a g\left( \frac{e^{-\mu\tau}}{\mu}c\right) d\tau \right) e^{-\mu a}da. $$
We have that  $
R_1(0) \geq R(0) >1, \ R_1(+\infty) = R(+\infty) =\beta(x_m)/\mu < 1, 
$
so that $F_1(c)$ has the biggest positive equilibrium $\theta_2$.  

\begin{cor} \label{cor11} Assume all conditions of Corollary \ref{cor8}. 
If $R(0) >1$ then  each continuous non-zero bounded function $b(t)$ satisfying equation (\ref{RE}) for all $t \in \R$ 
has $\theta_2$ as an upper bound: $
0\leq  b(t) \leq  \theta_2, \quad t \in \R. 
$
\end{cor}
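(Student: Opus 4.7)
The plan is to compare the eternal solution $b$ with the monotone majorant ${\mathcal F_1}$ and then reduce the problem to a scalar inequality for $M:=\sup_{t\in\R}b(t)$. Since $b(t)={\mathcal F}b_t$ and ${\mathcal F}$ takes values in $\R_+$, the function $b$ is automatically non-negative on $\R$, and the hypothesis that $b$ is non-zero then forces $M>0$. Boundedness gives $M<\infty$, so the constant function $M$ is a legitimate element of $L^1_\rho(\R_-,\R_+)$ dominating $b_t$ for every $t\in\R$.

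The global Lipschitz bound $\beta(x_m+x)\leq \beta_1(x)=\beta(x_m)+Lx$ yields ${\mathcal F}\phi\leq {\mathcal F_1}\phi$ pointwise for all $\phi\in L^1_\rho(\R_-,\R_+)$. Since $\beta_1$ is increasing and of polynomial (in fact linear) growth and $g$ is bounded continuous, Theorem~\ref{Th1} applies to ${\mathcal F_1}$, so from $b_t\leq M$ we obtain ${\mathcal F_1}b_t\leq {\mathcal F_1}(M)=F_1(M)=MR_1(M)$. Combining these two comparisons,
\[
b(t)\;=\;{\mathcal F}b_t\;\leq\;{\mathcal F_1}b_t\;\leq\;F_1(M),\qquad t\in\R,
\]
and passing to the supremum gives $M\leq MR_1(M)$; dividing by $M>0$ produces $R_1(M)\geq 1$.

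The remaining step is to translate $R_1(M)\geq 1$ into $M\leq\theta_2$. The function $R_1$ is continuous on $[0,+\infty)$ by dominated convergence (using that $g$ is bounded with $g(+\infty)=0$), and $R_1(+\infty)=\beta(x_m)/\mu<1$. If one had $M>\theta_2$, then continuity together with $R_1(M)\geq 1>R_1(+\infty)$ and the intermediate value theorem would produce a solution of $R_1(c)=1$ in $[M,+\infty)\subseteq(\theta_2,+\infty)$, contradicting the definition of $\theta_2$ as the largest positive equilibrium of $F_1$. Hence $M\leq\theta_2$ and the bound $0\leq b(t)\leq\theta_2$ follows.

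The only mildly delicate point is this last one, namely excluding any $c>\theta_2$ with $R_1(c)\geq 1$; everything else is simply the transfer of the comparison idea of Corollary~\ref{cor7} to the monotone majorant ${\mathcal F_1}$, combined with the monotonicity supplied by Theorem~\ref{Th1}.
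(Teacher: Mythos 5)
Your proof is correct and follows essentially the same route as the paper: compare $\mathcal F$ with the monotone majorant $\mathcal F_1$ (whose monotonicity comes from Theorem~\ref{Th1}), reduce the eternal bounded solution to a scalar inequality for its supremum $M$, and use the definition of $\theta_2$ as the largest positive fixed point of $F_1$ to conclude $M\leq\theta_2$. The only minor variation is that you take the supremum of the pointwise inequality $b(t)\leq F_1(M)$ directly, whereas the paper normalizes to $b(0)=\max_{s\in\R}b(s)$ via ``an appropriate translation or a limiting argument''; your version neatly sidesteps the case where the supremum is not attained.
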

\begin{proof} Without loss of generality, after using an appropriate translation or a limiting argument, we can assume  that $b(0)= \max_{s \in \R}b(s)$. But then $b(0) \leq \theta_2$ as a consequence of the chain of inequalities  
$b(0)= {\mathcal F}b_0 \leq {\mathcal F_1}b_0 \leq {\mathcal F_1}b(0)= F_1(b(0)). $ \qed
\end{proof}

\begin{cor} \label{cor9} Assume all conditions of Corollary \ref{cor8} and that $g(x) >0$ for all $x\geq 0$, 
$\beta(x) > 0$ a.e. on $[x_0,+\infty)$  for some $x_0\geq x_m$. 
If $R(0) >1$ then  the semiflow $\frak F^t \phi$  is uniformly persistent: i.e. there exists $\theta_1 >0$ such that  $\liminf_{t\to \infty} b(t)\geq \theta_1$ for each non-zero solution $b(t)$. 
\end{cor}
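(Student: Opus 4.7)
The plan is to argue by contradiction via a rescaling/compactness argument that reduces the problem to showing the linearized renewal equation at $b=0$ admits no non-trivial bounded non-negative entire solution. I would first reduce uniform (strong) persistence to uniform \emph{weak} persistence (existence of $\eta>0$ with $\limsup_{t\to\infty}b_\phi(t)>\eta$ for every non-zero $\phi$) via the standard Smith--Thieme bootstrap \cite[Ch.~5]{ST}, applicable here because the semiflow is continuous (Proposition \ref{P1}), $\mathcal{N}$ is positively invariant (Proposition \ref{P2}), trajectories are uniformly ultimately bounded (Theorem \ref{Th7}), Corollary \ref{cor8} gives asymptotic smoothness after an initial transient, and the flow on $\mathcal{N}$ trivially attracts $0$ in $L^1_\rho$.

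For weak persistence, suppose for contradiction that non-zero $\phi_n$ exist with $M_n := \sup_{t \ge T_n} b_{\phi_n}(t) \to 0$ for suitable $T_n$. By Proposition \ref{P2} and continuity, I would choose $t_n \ge T_n$ attaining $M_n$ with $t_n - T_n \to \infty$ (shifting $T_n$ further forward if the sup sits too close to the left endpoint, using that a decaying positive solution has arbitrarily late peak times). Set $c_n(t) := b_{\phi_n}(t_n+t)/M_n$; then $c_n(0)=1$ and $0\le c_n\le 1$ eventually on $[-n,\infty)$, and $c_n$ satisfies a rescaled renewal equation with kernel $K_n(a,t) = \beta\bigl(x_m + \int_0^a g(M_n e^{-\mu(\tau-a)}\int_a^\infty e^{-\mu s}c_n(t-s)\,ds)\,d\tau\bigr) e^{-\mu a}$, which converges, as $M_n\to 0$, to $K_0(a):=\beta(x_m+ag(0))e^{-\mu a}$ in the $L^1(\mathbb{R}_+)$-norm uniformly on compact $t$-sets. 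Here I use Lipschitz $\beta$, continuity of $g$ at $0$, and the uniform bound on $c_n$ on $[-n,\infty)$; contributions from the far past (where $c_n$ may be large) are controlled by the exponential weight $e^{-\mu a}$ together with a judicious choice of $t_n$ forcing $e^{-\mu t_n} = o(M_n)$.

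Writing $c_n = K_0 \ast c_n + r_n$ with $\|r_n\|_{L^\infty_{\mathrm{loc}}} \to 0$, translation continuity of $K_0 \in L^1$ yields equicontinuity of $(c_n)$ on compacta for large $n$, and Arzel\`a--Ascoli delivers a subsequential limit $c^* : \mathbb{R} \to [0,1]$, continuous, with $c^*(0)=1$, satisfying the linear renewal equation $c^*(t) = \int_0^\infty K_0(a)\, c^*(t-a)\,da$ on $\mathbb R$. Setting $I(T) := \int_{-T}^T c^*$, Fubini combined with the estimate $\int_{-T-a}^{T-a} c^* \ge I(T) - a$ (valid since $c^* \le 1$) gives $I(T) \ge R_T\, I(T) - C_T$ with $R_T := \int_0^{2T} K_0 \to R(0)>1$ and $C_T := \int_0^{2T} a K_0(a)\,da \to \int_0^\infty a K_0(a)\,da <\infty$ (finite by the linear growth bound on $\beta$). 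For $T$ large this forces $I(T) \le 2\int_0^\infty aK_0(a)\,da/(R(0)-1)$ uniformly in $T$, so $c^* \in L^1(\mathbb R)$; integrating the linear equation over $\mathbb R$ yields $\int c^* = R(0)\int c^*$, whence $\int c^* = 0$ and hence $c^* \equiv 0$ by non-negativity and continuity---contradicting $c^*(0)=1$.

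The main obstacle is the extraction of the limit: the derivative bound from Corollary \ref{cor8} scales as $|b_{\phi_n}'|/M_n$, which blows up under rescaling by $M_n\to 0$, so equicontinuity of $(c_n)$ must be recovered \emph{a posteriori} from the integral form of the equation via $L^1$-translation continuity of $K_0$. A subsidiary subtlety is the construction of $(t_n)$: ensuring both $t_n - T_n \to \infty$ and that the sup is attained at $t_n$, so that $c_n$ is bounded by $1$ on an interval growing to all of $\mathbb R$, requires exploiting eventual positivity and repeatedly shifting $T_n$ forward to produce interior peak times.
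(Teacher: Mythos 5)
Your proposal takes a genuinely different route from the paper, and parts of it are attractive, but the reduction step at the top has a real gap.

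The paper proves uniform persistence directly, \emph{not} via the weak-to-strong bootstrap. It first shows no eventually positive solution tends to $0$ (by evaluating the renewal equation at late ``running-minimum'' times, dividing by $b(t'_j)$, and passing to the limit to get $1\ge R(0)>1$); it then rules out the failure of uniform persistence by a two-case compactness argument, using Theorem~\ref{Th7} and Corollary~\ref{cor8} to extract total trajectories and finishing Case~2 with Proposition~\ref{P2}. The key technical device is the continuous functional $\mathcal R$ on $C_b$ and the threshold $\delta$ with $\inf\{\mathcal R(c): 0\le c\le\delta\}>1$, which avoids linearization entirely.

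The gap in your plan is the very first step, the invocation of ``the standard Smith--Thieme bootstrap''. Those theorems require some form of asymptotic smoothness/compactness, and the paper explicitly observes, right after Theorem~\ref{mainTT}, that $\frak F^t$ restricted to $L^1_\rho(\R_-,\R_+)\setminus\mathcal N$ is \emph{not} asymptotically smooth (there are heteroclinic orbits from the zero equilibrium to $\mathcal A$, so $\mathcal A$ does not attract bounded sets, and by \cite[Thm.~2.33]{ST} point-dissipativity plus asymptotic smoothness would force it to). Your assertion that ``Corollary~\ref{cor8} gives asymptotic smoothness after an initial transient'' does not repair this: the derivative bound controls $b'$ on $(0,\infty)$, but a bounded forward-invariant set in $L^1_\rho\setminus\mathcal N$ may approach $\mathcal N$ arbitrarily closely, orbits of such points linger near $0$ for unboundedly long times, and a compact attracting set would have to contain $0\in\mathcal N$, which is excluded from the phase space. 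So the bootstrap cannot be cited as a black box here; you would have to reproduce it by hand, which is precisely what the paper's Case~2 does.

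Your proposed proof of \emph{weak} persistence by rescaling to the linearization at $0$ is a genuinely different and appealing idea: passing to $c_n=b_{\phi_n}(t_n+\cdot)/M_n$, obtaining a bounded entire solution $c^*\in[0,1]$ of $c^*(t)=\int_0^\infty K_0(a)c^*(t-a)\,da$ with $K_0(a)=\beta(x_m+ag(0))e^{-\mu a}$, and then the integration trick $\int_\R c^*=R(0)\int_\R c^*$ forcing $c^*\equiv0$ is clean, and the Fubini estimate $(R(0)-1)I(T)\le\int_0^\infty aK_0(a)\,da$ to justify integrability is correct. But this step is also considerably more delicate than you indicate. The rescaling by the vanishing constant $M_n$ destroys the uniform derivative bound from Corollary~\ref{cor8} (you acknowledge this), and beyond that you must control, in the rescaled kernel and in $K_n*c_n$, the contributions of the ``far past'' $t-a<T_n-t_n$ where $c_n=b/M_n$ is \emph{not} bounded by $1$ and may be of size $(\alpha_2|\phi_n|_\rho+\alpha_3)/M_n$. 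This forces a simultaneous choice of $t_n, T_n$ making $e^{-\mu(t_n-T_n)}\,t_n\,T_n\,(\alpha_2|\phi_n|_\rho+\alpha_3)/M_n\to0$ and $e^{-\mu t_n}|\phi_n|_\mu/M_n\to0$, which is not just ``$e^{-\mu t_n}=o(M_n)$''. The paper sidesteps all of this by working with the solutions themselves (not rescaled) and only taking limits where Theorem~\ref{Th7} and Corollary~\ref{cor8} give genuine Arzel\`a--Ascoli compactness.

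In short: the linearized entire-solution argument is a nice alternative to the paper's $\mathcal R$-functional argument, but as written the proposal (i) relies on a bootstrap theorem whose hypothesis is explicitly noted to fail in this setting, and (ii) glosses over nontrivial far-past bookkeeping in the rescaling. Both issues can likely be repaired, but then you would essentially be reconstructing the two-case compactness argument that the paper carries out directly.
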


\begin{proof} Let $C_b$ denote the space of all continuous bounded functions $c:\R_+\to \R_+$ endowed with the sup-norm. 
The functional ${\mathcal R}: C_b \to \R_+$ defined by 
$$
{\mathcal R}(c)=\int_0^\infty \beta\left( x_m + \int_0^a g\left( \frac{e^{-\mu\tau}}{\mu}c(a)\right) d\tau \right) e^{-\mu a}da$$
is clearly continuous at $c=0$ and therefore there exists some $\delta >0$ such that 
$$\rho:=  \inf\left\{{\mathcal R}(c): c\in C_b, \ c(a)\in [0,\delta], a \geq 0 \right\}>1. $$  


We start by proving the impossibility of the convergence $\lim_{t\to \infty} b(t)=0$ for an eventually positive solution $b(t)$. 
Due to Proposition \ref{P2}, without loss of generality we can assume that
$b(t) >0$ for all $t \geq 0$. 
If $b(+\infty)=0$, there exists  an  increasing sequence of numbers $t'_j\to \infty$,  such that $b(t'_j)\to 0$ and $0<b(t'_j)\leq b(t)$ for all $t \in [0,t'_j]$.  Then we get from (\ref{RE}) that
$$
b(t'_j) \geq \int_0^{t'_j} \beta\left( x_m + \int_0^a g\left( e^{-\mu(\tau -a)} \int_a^\infty e^{-\mu s} b(t'_j-s)\,ds \right) d\tau \right) e^{-\mu a} b(t'_j-a)da \geq 
$$
$$
 \int_0^{t'_j} \beta\left( x_m + \int_0^a g\left( e^{-\mu(\tau -a)} \int_a^\infty e^{-\mu s} b(t'_j-s)\,ds \right) d\tau \right) e^{-\mu a} b(t'_j)da. 
$$
Consequently, 
\begin{equation}\label{ab}
1 \geq  \int_0^{t'_j} \beta\left( x_m + \int_0^a g\left( e^{-\mu(\tau -a)} \int_a^\infty e^{-\mu s} b(t'_j-s)\,ds \right) d\tau \right) e^{-\mu a}da. 
\end{equation}

Taking the limit in the latter  inequality as $j \to \infty$, we obtain that
\[
	1 \geq \int_0^\infty \beta(x_m + ag(0))e^{-\mu a}da = R(0) >1,
\]
a contradiction.  Consequently, $\limsup_{t\to \infty} b(t)>0$ for each non-zero solution $b(t)$.

Let assume now that   the semiflow $\frak F^t \phi$  is not uniformly persistent, i.e. 
for each $n$ there exists a non-zero solution $b_n(t)$ such that  $\liminf_{t\to \infty} b_n(t)< 1/n$. Using, if necessary, appropriately shifted solutions $b_n(t+s_n)$ with $s_n \geq 0$  and invoking the ultimate uniform boundedness 
result of Theorem \ref{Th7}, without loss of generality we can assume that $b_n(t) >0$ for all $t  \geq 0$ and that the sequence of the initial segments $\phi_n(s)=b_n(s), \ s \leq 0$, is bounded in  $L^1_\rho(\R_{-}, \R_+)$. In view of (\ref{esty}), this implies the uniform boundedness  of the sequence $\{b'_n(t)\}$ on $\R_+$.

We will consider the following two situations: 

\underline{Case 1}. For some $m \in \N$ it holds that 
\begin{equation}\label{ca1}
0\leq  \zeta_1 =:\liminf_{t\to \infty} b_m(t) \leq \limsup_{t\to \infty} b_m(t):=\zeta_2\leq \delta, \quad \zeta_2 >0. 
\end{equation}
If $ \zeta_1=0$ then there exists a monotone sequence $s_k \to +\infty$ such that $\lim_{k\to \infty} b_m(s_k)=0$
and  $0<b_m(s_k)\leq b_m(t)$ for all $t \in [0,s_k]$.  Since equation (\ref{RE}) is translation invariant, the functions 
$b_{m,j}(t):=b_m(t+s_j)$ are also solutions of it. By invoking the Arzel\'a-Ascoli theorem and using Theorem \ref{Th7} and Corollary \ref{cor8}, we may conclude that some subsequence $b_{m,j_k}(t)$ is converging, uniformly on compact subsets of $\R$,  to a continuous function $b_*(t)$ such that $b_*(t)\leq  \zeta_2\leq \delta$, $t \in \R$.  Arguing as above of the formula (\ref{ab}), we also obtain 
$$
1 \geq  \int_0^{s_{j_k}} \beta\left( x_m + \int_0^a g\left( e^{-\mu(\tau -a)} \int_a^\infty e^{-\mu s} b_m(s_{j_k}-s)\,ds \right) d\tau \right) e^{-\mu a}da. 
$$
After taking the limit in this  inequality as $k \to \infty$, we get the following contradiction 
$$ 
1 \geq \int_0^\infty \beta\left( x_m + \int_0^a g\left( e^{-\mu(\tau -a)} \int_a^\infty e^{-\mu s} b_*(-s)\,ds \right) d\tau \right) e^{-\mu a}da >1,
$$
since
\begin{equation}\label{bo}
e^{-\mu(\tau -a)} \int_a^\infty e^{-\mu s} b_*(-s)\,ds \leq \frac{\delta}{\mu}e^{-\mu\tau}, \quad a \geq 0. 
\end{equation}
If $ \zeta_1>0$, we can modify the previous argumentation as follows. Take $\epsilon \in (0, 1)$ small enough 
to satisfy $(1-\epsilon)\rho >1$. We also can assume that $b_m(t) \geq \zeta_1(1-\epsilon)$ for all $t \geq 0$. 
Then there exists a monotone sequence $s_k \to +\infty$ such that $\lim_{k\to \infty} b_m(s_k)=\zeta_1$. Choose a continuous function $b_*(t)$ as above. Then 
$$
 b_m(s_k)/(\zeta_1(1-\epsilon)) \geq  \int_0^{s_k} \beta\left( x_m + \int_0^a g\left( e^{-\mu(\tau -a)} \int_a^\infty e^{-\mu s} b_m(s_k-s)\,ds \right) d\tau \right) e^{-\mu a}da. 
$$
After taking the limit in this  inequality as $k \to \infty$, we get the following contradiction 
$$ 
1/(1-\epsilon) \geq \int_0^\infty \beta\left( x_m + \int_0^a g\left( e^{-\mu(\tau -a)} \int_a^\infty e^{-\mu s} b_*(-s)\,ds \right) d\tau \right) e^{-\mu a}da \geq \rho,
$$
in view of (\ref{bo}).
Thus the situation presented in formula  (\ref{ca1}) cannot happen.

\underline{Case 2}. For each $n \in \N, \ n > 1/\delta$,  it holds that 
$$0\leq  \zeta_n =:\liminf_{t\to \infty} b_n(t) < 1/n < \delta <\limsup_{t\to \infty} b_n(t).$$ 

Then for each fixed $n> 1/\delta$ there are increasing sequences of positive numbers $\{s_j\}, \{t_j\},$ $s_j < t_j$ such that $s_j, t_j\to \infty$,  $b_n(s_j)=\delta$, $b_n(t_j) = 1/n$ and $1/n\leq b_n(t) \leq \delta$ for all $t \in [s_j,t_j]$. Choose some converging subsequence  $t_{j_k}-s_{j_k} \to \tau_n \in [0,+\infty]$. Since equation (\ref{RE}) is translation invariant, the functions 
$b_{n,j}(t)=b_n(t+s_j)$ are also solutions of it. By invoking the Arzel\'a-Ascoli theorem and using Theorem \ref{Th7} and Corollary \ref{cor8}, we may conclude that some subsequence $b_{n, j_k}(t)$ is converging, uniformly on compact subsets of $\R$,  to  a continuous bounded function $d_n(t)$ such that $d_n(0)= \delta= \max_{s \in [0, \tau_n]}d_n(s)$ and $\limsup_{t\to \infty} d_n(t) >0$. But then the option $\tau_n= +\infty$ is not possible in view of the analysis done in the Case 1.  Therefore $\tau_n$ is a finite positive number and $d_n(\tau_n)=1/n$. 
Clearly, the sequences  $d_n(t)$ and  $d'_n(t)$ are uniformly bounded on $\R$.  Therefore, once more applying  the Arzel\'a-Ascoli theorem, we find some subsequence $i_k$ and an extended positive number $\tau_\star \in [0, +\infty]$ such that 
$\tau_{i_k} \to \tau_\star$ and  $d_{i_k}(t)$ is converging, uniformly on compact subsets of $\R$,  to  a continuous bounded function $d_\star(t)$ such that $d_\star(0)= \delta= \max_{s \in [0,\tau_\star]}b(s)$ and $\limsup_{t\to \infty} d_\star(t) >0$. Again, the option $\tau_\star= +\infty$ is not possible in view of the analysis done in the Case 1.  Therefore $\tau_\star$ is a finite positive number and $d_\star(\tau_\star)=0$. However, since $d_\star(t)$ satisfies (\ref{RE}) for all $t \in \R$ and $d_\star(0)= \delta >0$ we know that $d_\star(t) >0$ for all $t \in \R$. The obtained contradiction completes the proof of  Corollary \ref{cor9}.  \qed 
\end{proof}
{\sc \bf Proof of Theorem \ref{mainTT}.} Let $\mathcal B$ comprises all continuous functions $\phi: (-\infty,0] \to [0,+\infty)$ such that $0.5\theta_1 \leq \phi(t) \leq 2\theta_2,\ t \leq 0,$ with $\theta_j$ defined in Corollaries \ref{cor11} and \ref{cor9}. Then  $\mathcal B$ 
is bounded  in $L^1_\rho(\R_{-}, \R_+)$ by  $2\theta_2/\mu$. Next, consider the subset $\mathcal D$ of $\mathcal B$  consisting from all lipschitzian  functions with the Lipschitz constant which is less or equal to  $\theta_3:= 2\alpha_4\theta_2/\mu + \alpha_5$ with  $\alpha_4, \alpha_5$ being defined in Corollary \ref{cor8}. Clearly,  $\mathcal D$ 
is a compact subset of $L^1_\rho(\R_{-}, \R_+)$ attracting every positive solution due to Corollaries \ref{cor8}, \ref{cor11} and \ref{cor9}. This fact  implies the existence of the compact attractor $\mathcal A$ of points (e.g. see \cite[Theorem 2.17(c)]{ST}) possessing properties mentioned in the statement of Theorem \ref{mainTT}.   \qed 

\vspace{2mm}

\noindent{\sc \bf Proof of Theorem \ref{mainT}.}  (a) Set $\theta_1\leq m= \inf_{\phi \in \mathcal A}\phi(0); \ M= \sup_{\phi \in \mathcal A}\phi(0) $. Clearly, $\theta_1\leq m \leq M \leq \theta_2$ and,  
by Corollary \ref{cor7}, we find that $[m,M] \subseteq [F(m),F(M)]$. In view of the monotonicity properties of both $F(x)$ and $R(x)$, this can happens if and only if $m=M=b_*$. Thus $\mathcal A= \{b_*\}$. Let now suppose  that the equilibrium $b_*$ is not stable as indicated in the statement of the theorem. Then there exists $\epsilon >0$
a sequence $\phi_n \to b_*$ such that corresponding solutions $b_n(t)$ satisfy the estimate 
$
\sup_{t \geq 0}|b_n(t)-b_*| > \epsilon, $ $n \in \N. 
$
Since $\phi_n \to b_*$, the sequences $b_n(t), b'_n(t)$ are uniformly bounded on $\R_+$ and  $b_n(t)$ converges uniformly to $b_*$ on the intervals $[0,m]$, $m \in \N$.  Since $b_n(+\infty) = b_*$, there exists a sequence $s_n$ of positive numbers such that 
\begin{equation}\label{aa}
\max_{t \in [0,s_n]}|b_n(t)-b_*| = |b_n(s_n)-b_*|= \epsilon.
\end{equation}
 It is clear that $s_n \to \infty$.  But then, considering a sequence of shifted solutions $b_n(t+s_n)$ and using limiting argument, we establish the existence of a bounded continuous function $b_*(t)$ satisfying 
equation (\ref{RE}) for all $t \in \R$ and such that $$\sup_{t \in \R}|b_*(t)-b_*| \geq |b_*(0)-b_*| = \epsilon=\max_{t \leq 0}|b_*(t)-b_*| $$ This means that either $M_*=\sup_{t \in \R}b_*(t)$ or $m_*=\inf_{t \in \R}b_*(t)$ is different from $b_*$. Moreover, since $b_*(+\infty)=b_*$, we find that $m_*>0$. But the inclusion $[m_*,M_*] \subseteq [F(m_*),F(M_*)]$ then implies $m_*=M_*=b_*$, a contradiction proving the stability of $b_*$. 

Next, choose $\delta >0$ small enough to have $\theta_1 \leq b(t, \phi) \leq \theta_2, \ t \geq 0$, for each initial function $\phi$ satisfying $|\phi-b_*|_\rho \leq \delta$.  We claim that $b(t, \phi)$ converges to $b_*$ uniformly on  the  ball 
 $B_\delta= \{\phi:  |\phi-b_*|_\rho \leq \delta\}$. Indeed, otherwise there is $\gamma >0$ and a sequences $\phi_n \in B_\delta$ and $s_n \to + \infty$ such that  $|b(s_n,\phi_n)-b_*|=\gamma$. Then arguing as in the previous paragraph below (\ref{aa}), we obtain a contradiction.  Hence, $\mathcal A= \{b_*\}$ attracts its neighbourhood $B_\delta$ and therefore it also attracts neighbourhoods of compact sets in $L^1_\rho(\R_{-}, \R_+)\setminus \mathcal N$, e.g. see 
 \cite[Corollary 2.32]{ST}.

(b) Assume that $R(0) >1$ and $\beta(x_m) \geq \mu$, then $b'(t) = (\beta(x_m)-\mu) b(t) +$
$$ \int_{-\infty}^t \beta'\left( x_m + \int_a^t g\left( e^{-\mu \tau} \int_{-\infty}^a e^{\mu s}b(s)ds\right)d\tau\right)g\left( e^{-\mu t}\int_{-\infty}^a e^{\mu s}b(s)ds\right) e^{-\mu (t-a)}b(a)da, 
$$
so that $b(t)$ is an increasing function. If $b(+\infty)$ were finite, the constant function $b(t)=b(+\infty)$ should be an equilibrium for the equation (\ref{RE}) and therefore it should satisfy the equation $R(b(+\infty)) =1$ which is not possible. 

(c) Again, if $R(0)\leq 1$ then each solution is eventually bounded (cf. Corollary \ref{cor11}) while $[m,M] \subseteq [F(m),F(M)]$ implies that $m=M=0$. Observe  that $b_*=0$ when  $R(0)=1$.  \qed

\section{Conclusion}
In recent study \cite{FBCD}, C. Barril et al proposed a phenomenological size-structured model describing  the dynamics of trees growth in the forest. This model is given in the form of relatively complex integral renewal equation with infinite delay. Importantly, it nicely agrees  with the classical PDE formulation derived by invoking a conservation law \cite{FBCD}. 

In this manuscript, which can be considered as a companion paper for \cite{FBCD}, we provide additional arguments justifying the well-posedness of the mentioned renewal equation. In particular, under more realistic conditions imposed on the growth and reproduction rates $g$ and $\beta$ we establish the ultimate uniform boundedness and uniform persistence of  tree's population birth rate. In the case when $g$ and $\beta$ are monotone and the associated basic reproduction number is bigger than 1, we also prove the global asymptotic stability of the model. We believe that this result can be further extended for certain unimodal functions $\beta$, cf. \cite{HT}. 

It should be noted that our approach is rather different from the principle of linearised stability for delay equations used in \cite{FBCD}.  Here we follow the techniques developed in \cite{HT}  to study some renewal equation associated  with a specific Gurtin-MacCamy's population model.

\section{Appendix}
{\sc \bf Proof of Proposition \ref{P1}.} 
Note that the condition of polynomial growth of $\beta$ is sufficient to ensure that $r_\phi(t)$ is well defined for all $t\geq 0$. Indeed, we have that
\begin{align*}
	r_\phi(t) &= \int_{-\infty}^0 \beta\left( x_m+ \int_a^t g\left( e^{-\mu \tau} \int_{-\infty}^a e^{\mu s}\phi(s)ds \right) d\tau \right) e^{-\mu(t-a)}\phi(a)da\\
	&= e^{-\mu t}\int_{-\infty}^0 \beta\left( x_m+ \int_a^t g\left( e^{-\mu \tau} \int_{-\infty}^a e^{\mu s}\phi(s)ds \right) d\tau \right) e^{(\mu-\rho)a}e^{\rho a}\phi(a)da,
\end{align*}
so that the latter integral is finite. Moreover, for some $m \in \N$, $r_\phi(t)= O(t^me^{-\mu t})$ at $t\to\infty$.

{Part (a)} Let $C([0,T])_+$ denote the space of all non-negative continuous functions on the interval $[0,T]$ provided with the topology of uniform convergence.  Fix $T>0$, $\phi\in L^1_\rho(\R_{-},\R_+)$ and define $\mathcal{A} : C([0,T])_+ \to C([0,T])_+$ by $\mathcal{A}b = \mathcal{V}_\phi b + r_\phi$. It is straightforward  to check that  $\mathcal{A}$ satisfies conditions of the Schauder fixed point theorem. Moreover,  there exists norm $\|\cdot\|$ in $C([0,T])$ generating the topology of  uniform convergence and $R_0=R_0(T)>0$ for which $\mathcal{A} B_R \subseteq B_R$ for every $R\geq R_0$, where $B_R\subseteq C([0,T])_+$ denotes the subset of functions with norm less or equal to $R$. Thus equation 
$b = \mathcal{V}_\phi b + r_\phi$ has at least one solution $b= b(\phi)$ in $B_R$. 

Next, for each $n\in\N$ consider  continuous solution $b_n$  to \eqref{RE} existing on the finite interval $[0,n]$.  For every fixed $N\in \N$ the sequence $\{ b_n \}_{n\geq N}$ is uniformly  bounded and equicontinuous in $C([0,N])$. In fact, by taking $\theta>0$ large enough and considering the  norm $\| b \|_\theta = \max_{t\in [0,N]} |e^{-\theta t}  b(t)|$ we can prove that there is positive number $\alpha$ such that $\|b_n\|_\theta \leq \alpha \max_{t\geq 0} r_\phi(t)$ for every $n\geq N$. This property of the family $\{ b_n \}_{n\geq N}$ assures its equicontinuity. Then the Arzel\'a-Ascoli theorem allows to construct a continuous solution of \eqref{RE} on $[0,\infty)$ by means of a diagonal argument.

{Part (b).} Fix again $T>0$ and $\phi \in  L^1_\rho(\R_{-},\R_+)$. Since the argument of $\beta$ in the expression for $(\mathcal{V}_\phi b)(t)$ is changing between $x_m$ and $x_m+T\sup_{s \geq 0}g(s)$, without loss of generality we may suppose that $\beta$ is a globally Lipshitzian function. If $b_1(t), b_2(t)$ solve equation (\ref{RE})
on $[0,T]$ with the same initial datum $b_2(s)=b_1(s)=\phi(s), s \leq 0$, then it follows from (a) and the proof of Corollary \ref{cor8} that $|b_j(t)|+|b'_j(t)| \leq M,$ $t \in [0,T]$, with some universal constant $M$.  It can be also proved that 
$$|b_2(t)-b_1(t)| \leq \int_0^t K(t,s)|b_2(s)-b_1(s)|ds, \quad t \in [0,T],$$ 
with some non-negative function $K(t,s)$ (depending on $b_j(t)$)   which is continuous on the closed triangle $\{(s,t): 0\leq s \leq t \leq T\}$. 
Then an application of the Gronwall-Bellman inequality shows that $b_1(t)\equiv b_2(t)$ on $[0,T]$. 
The cases $\phi=0$ and $\phi \not=0$ were considered separately by us. The first case is trivial and in the latter case  we avoid imposing the Lipschitz condition on $g$ by using the following estimates (where we set 
$B_j(a)=  \int_{-\infty}^a e^{\mu s}b_j(s)ds$) 
$$
\left|\int_a^t g\left( e^{-\mu \tau} \int_{-\infty}^a e^{\mu s}b_2(s)ds \right) d\tau - \int_a^t g\left( e^{-\mu \tau} \int_{-\infty}^a e^{\mu s}b_1(s)ds \right) d\tau\right| = 
$$
$$
\frac 1 \mu\left| \int_{e^{-\mu a}B_2(a)}^{e^{-\mu t}B_2(a)}(g(s)/s)ds- \int_{e^{-\mu a}B_1(a)}^{e^{-\mu t}B_1(a)}(g(s)/s)ds\right| =
$$ 
$$
\frac 1 \mu\left|- \int_{e^{-\mu a}B_1(a)}^{e^{-\mu a}B_2(a)}(g(s)/s)ds+ \int_{e^{-\mu t}B_1(a)}^{e^{-\mu t}B_2(a)}(g(s)/s)ds\right| \leq 
$$ 
$$
\frac{2\sup_{s \geq 0}g(s)}{ \mu}\left|\ln\frac{B_2(a)}{B_1(a)}\right| \leq \frac{2\sup_{s \geq 0}g(s)}{ \mu}\left|\ln\left(1+ \frac{\int_{0}^a e^{\mu s}|b_2(s)-b_1(s)|ds}{B_1(0)}\right)\right|\leq 
$$ 
$$
\frac{2\sup_{s \geq 0}g(s)}{ \mu |\phi|_\mu}\int_{0}^t e^{\mu s}|b_2(s)-b_1(s)|ds, \quad 0 \leq a \leq t. 
$$ 

{Part (c).}  To prove that the solution $b_\phi$ depends continuously 
 on $\phi \in L^1_\rho(\R_{-},\R_+)$, take an arbitrary initial function $\phi_0$ in this space and consider sequence  $\phi_n \to \phi_0$ together with the sequence of respective solutions $b_n(t)$ and $b_0(t)$ for equation (\ref{RE}) on $[0,T]$. If the sequence $b_n(t)$ does not converge uniformly to $b_0(t)$ then it contains a subsequence $b_{n_j}(t)$
converging uniformly to some $b_*(t) \not= b_0(t)$, see proofs in (a), (b). Then  taking limit as $j \to +\infty$ in  the renewal equations (\ref{RE}) for $b_{n_j}(t)$, we obtain that the limiting function $b_*(t)$ also solves (\ref{RE}) with the initial data $\phi_0$. This, however, contradicts to the uniqueness result established in (b). 
 \qed

\vspace{2mm}

{\sc \bf Proof of Proposition \ref{P2}.} 
 Suppose now that $b(s_0) >0$ at some point $s_0>0$. Using the shifted solution $b_1(t)=b(t+s_0)$ if necessary, without loss of generality we can assume that $s_0=0$. 
Considering equation (\ref{RE}) in the Volterra's form $b(t) =  (\mathcal{V}_\phi b)(t) + r_\phi(t)$, 
we obtain that 
$b(t) \geq \int_0^t K(a)b(t-a)ds+ r_\phi(t)$, where 
$$
K(a)=  \inf_{t \geq a\geq 0}\beta\left( x_m + \int_0^a g\left( e^{-\mu(\tau -a)}(\int_a^t e^{-\mu s} b(t-s)ds+ e^{-\mu t}|\phi|_\mu)\right)d\tau\right) e^{-\mu a}$$
$$
r_\phi(t)= \int_t^\infty \beta\left( x_m + \int_0^a g\left( e^{-\mu(\tau -a)} \int_a^\infty e^{-\mu s} \phi(t-s)ds\right) d\tau \right) e^{-\mu a} \phi(t-a)da.
$$
Since for all $t \geq a \geq \tau \geq 0$, it holds that 
$$0 \leq e^{-\mu(\tau -a)}\int_a^t e^{-\mu s} b(t-s)ds \leq \Gamma:=\sup_{s \geq 0}b(s)/\mu +|\phi|_\mu, $$
if $b(t)$ is bounded on $\R_+$
we find that in every case 
$$\int_0^a g\left( e^{-\mu(\tau -a)}(\int_a^t e^{-\mu s} b(t-s)ds+ e^{-\mu t}|\phi|_\mu)\right)d\tau \geq ag_*,$$
where 
$
g_*:= \min_{x \in [0,\Gamma]}g(x) >0
$
if $b$ is bounded and $
g_*:= \inf_{x \geq 0}g(x)>0
$
otherwise. 
Clearly, $K(a) >0$ for all $a$ such that  $a> (x_0-x_m)/g_*$ and $r_\phi(t)$ is continuous on $\R_+$, with 
$r_\phi(0)=b(0) >0$. Then applying Corollary B.6 in \cite{ST}, we conclude that $b(t) >0$ for all large $t$. 
\qed

\section*{Acknowledgments}    \noindent  This research was supported by the projects FONDECYT 1231169 and AMSUD220002 (ANID, Chile). The authors are indebted to Pierre Magal and Quentin Griette for motivating discussions during the Winter School on Mathematical Modelling in Epidemiology and Medicine 2023, Valpara\'iso, Chile,  on the approaches used in  
\cite{HT,MM},  and  for calling authors'  attention to the works \cite{MZ,HS,GW}.

\vspace{0mm}

\section*{Data availability statement} 

 \noindent   Our manuscript has no associated data. 

\appendix


\begin{thebibliography}{99}


\bibitem{AH} F.V. Atkinson, J.R. Haddock, On determining phase spaces for functional differential equations, Funkcial. Ekvac. 31 (1988), 331-347.

\bibitem{FBCD} C. Barril, \`A. Calsina, O. Diekmann, J. Z. Farkas,  On competition through growth reduction, e-print 	arXiv:2303.02981, 
https://doi.org/10.48550/arXiv.2303.02981

\bibitem{DG}  O. Diekmann, M. Gyllenberg, Equations with infinite delay: blending the abstract and the concrete, J.  Diff. Equations, 252 (2012), 819--851.




\bibitem{HT}  F. Herrera, S. Trofimchuk, Dynamics of one-dimensional maps and Gurtin-MacCamy's population model. Part I: asymptotically constant solutions, Ukrainian Math. J. (in memory of O. Sharkovsky),  75 (2023), 1635-1651, 
https://doi.org/10.3842/umzh.v75i12.7678



\bibitem{MM}  Z. Ma, P. Magal, Global asymptotic stability for Gurtin-MacCamy's population dynamics model, Proceedings of the AMS (2021),  https://doi.org/10.1090/proc/15629

\bibitem{MZ}  P. Magal, Z. Zhang, Competition for light in a forest population dynamic model: from computer model to mathematical model, Journal of Theoretical Biology, 419 (2017), 290-304.

\bibitem{MZh}  P. Magal, X.-Q. Zhao, Global attractors and steady states for uniformly persistent dynamical systems, SIAM J. Math. Anal. 37 (2005), 251--275.

\bibitem{MD} J.A.J Metz,  O. Diekmann, The dynamics of physiologically structured populations, Lecture Notes in Biomathematics, 68, 1986.

\bibitem{HS}  H.L. Smith,  Reduction of structured population models to threshold-type delay equations and functional differential equations: a case study. Math. Biosci.
113 (1993), 1--23.

\bibitem{Smith}  H.L. Smith,  { Monotone dynamical systems,} Amer. Math. Soc.,  Providence, 1995.

\bibitem{ST} H.L. Smith and H.R. Thieme, Dynamical systems and population persistence. Vol. 118. Graduate Studies in Mathematics. American Mathematical Society, Providence, RI, 2011.


\bibitem{GW} G. F. Webb, Population models structured by age, size, and spatial position. Structured Population Models in Biology and Epidemiology, Lecture Notes in Mathematics, vol. 1936. Springer-Verlag, Berlin/New York, pp. 1-49, 2008. 


\end{thebibliography}
\end{document}